\renewenvironment{proof}{{\it Proof.}}{\hfill\qedsymbol}
\newcommand{\qedsymbol}{$\square$}
\newcommand{\RR}{{\mathbb{R}}}
\newcommand{\tT} {\tilde{T}}
\def\be#1\ee{\begin{equation}#1\end{equation}}
\newcommand\hgam{\boldsymbol{\hat\gamma}}
\newcommand\gam{\boldsymbol{\gamma}}
\newcommand{\Alpha}{\boldsymbol \alpha}
\newcommand{\Beta}{\boldsymbol \beta}
\newcommand{\hKappa}{\hat{\boldsymbol \kappa}}
\renewcommand{\Xi}{\boldsymbol \xi}
\newcommand\blF{ {\cal F}}
\newcommand\blf{ {f}}
\newcommand\blTF{ {\tilde {\cal F}}}
\newcommand\blC{{\cal C}}
\newcommand\blU{{\cal U}}
\newcommand\bx{\mathbf{x}}
\DeclareFontFamily{U}{mathx}{\hyphenchar\font45}
\DeclareFontShape{U}{mathx}{m}{n}{
 <5> <6> <7> <8> <9> <10>
 <10.95> <12> <14.4> <17.28> <20.74> <24.88>
 mathx10
 }{}
\DeclareSymbolFont{mathx}{U}{mathx}{m}{n}
\DeclareMathAccent{\widecheck}{0}{mathx}{"71}
\DeclareMathAccent{\wideparen}{0}{mathx}{"75}
\definecolor{red}{rgb}{0,0,0}
\definecolor{purple}{rgb}{0,0,0}
\begin{document}

\title{Monotonicity, bounds and acceleration of Block Gauss and Gauss-Radau quadrature for computing $B^T \phi(A) B$} 

\titlerunning{Gauss-Radau quadrature for computing $B^T \phi(A) B$ } 

\author{J\"orn Zimmerling \and
 Vladimir Druskin \and
Valeria Simoncini
}


\institute{J\"orn Zimmerling, \at
 Institutionen f\"or informationsteknologi, Uppsala Universitet, L\"agerhyddsv\"agen 2, 75237 Uppsala, Sweden. \\
 \email{jorn.zimmerling@it.uu.se} 
 \and
 Vladimir Druskin, \at
 Worcester Polytechnic Institute, USA.
		\email{vdruskin@wpi.edu} 
		\and
		Valeria Simoncini,\\
		Dipartimento di Matematica and (AM)$^2$,
		Alma Mater Studiorum - Università di Bologna, 40126 Bologna, and IMATI-CNR, Pavia, Italy.\\
		\email{valeria.simoncini@unibo.it}
}

\date{\today}

\maketitle

\begin{abstract}
In this paper, we explore quadratures for the evaluation of $B^T \phi(A) B$ where $A$ is a symmetric positive-definite (s.p.d.) matrix in $\mathbb{R}^{n \times n}$, $B$ is a tall matrix in $\mathbb{R}^{n \times p}$, and $\phi(\cdot)$ represents a matrix function that is regular enough in the neighborhood of $A$'s spectrum, e.g., a Stieltjes or exponential function. These formulations, for example, commonly arise in the computation of multiple-input multiple-output (MIMO) transfer functions for diffusion PDEs.

We propose an approximation scheme for $B^T \phi(A) B$ leveraging the block Lanczos algorithm and its equivalent representation through Stieltjes matrix continued fractions. We extend the notion of Gauss-Radau quadrature to the block case, facilitating the derivation of easily computable error bounds. For problems stemming from the discretization of self-adjoint operators with a continuous spectrum, we obtain sharp estimates grounded in potential theory for Padé approximations and justify averaging algorithms at no added computational cost. The obtained results are illustrated on large-scale examples of 2D diffusion and 3D Maxwell's equations and a graph from the SNAP repository. We also present promising experimental results on convergence acceleration via random enrichment of the initial block $B$.

\keywords{Block Lanczos \and Quadrature \and Matrix-Functions \and Gauss-Radau}
\subclass{65F10 \and 65N22 \and 65F50 \and 65F60}
\end{abstract}

\section{Introduction}
\subsection{Problem statement}
Let $A=A^T\in\RR^{n\times n}$ be a symmetric positive definite matrix and let $B$ be a tall matrix with $B\in\RR^{n\times p}$, with $p\ll n$. We refer to $p$ as the block size. We want to approximate 
\be
\blF(s) =B^T\phi(A,s)B
\ee
via approximations of $\phi(A,s)B$ on a block Krylov subspace 
\[
\mathcal{K}_m(A,B)={\rm blkspan} \{B, AB, A^2 B , \dots A^{m-1} B \}
\]
where blkspan means that the whole space range$([B, AB, A^2 B , \dots A^{m-1} B])$ is generated.
{
In the literature various functions $\phi$ are considered to derive bounds for the scalar case, for instance strictly completely monotonic functions \cite{BenziMatrixF,GaussRadauBoundNetwork} or functions with sufficient derivatives on the spectral interval of $A$ \cite{lot2008}.} Here, we mainly consider the simpler problem $\phi(A,s)=(A+sI)^{-1}$ with $s\in\RR_+$,
which can be generalized to Stieltjes functions. Our numerical experiments will also 
illustrate the applicability of our results for exponentials and resolvents 
with purely imaginary $s\in\imath\RR$, which is an important class of problems in the computation of transfer functions of linear time-invariant (LTI) systems with imaginary Laplace frequencies
and time domain responses.
To simplify notation, without loss of generality, we assume that the matrix $B$ has orthonormal columns.

Gaussian-quadrature rules via Krylov subspace methods \cite{GM10} have been extended to the block case \cite{RRT16,lot2013}. Gauss-Radau quadrature rules have also been extended to the block case via the block Krylov subspace \cite{Lun18}, where at each block Krylov iteration, $p$ Ritz values of $A$ projected onto the block Krylov subspace are prescribed by a low-rank modification of this projection. These $p$ Ritz values are distinct and {\it larger} than the maximum eigenvalue of $A$, which did not lead to an acceleration in convergence of the block Krylov method in~\cite{Lun18}.

 In this paper, we take a different approach and extend Gauss-Radau quadrature to the block case by prescribing $p$ identical Ritz values that are {\it smaller} than the minimum eigenvalue of $A$. When considering the quadratic form $B^T \phi(A) B$, this allows us to obtain two-sided estimates of the solution at the cost of the standard block Lanczos algorithm in the Krylov block subspace $\mathcal{K}_m(A,B)$. We further show that this choice leads to a (convergent) monotonically decreasing approximation and allows for computable and tight error bounds. This definition is consistent with the Gauss-Radau bound for the $A$-norm of the error in CG, which to date only has been developed for the scalar case \cite{Meurant2023}. The matrices we consider in this work arise as discretizations of PDE operators in unbounded domains whose smallest eigenvalue tends to zero, therefore, the $p$ prescribed Ritz values are zero.

We focus on applications stemming from diffusive electromagnetic problems in an unbounded domain where we show that averaging Gauss-Radau and Gauss quadratures leads to a cost-free reduction in approximation error. This significantly improves the accuracy of the standard Krylov projection approximation, even though matrix-function approximation by Krylov projection is near optimal in global norms \cite{amsel2023nearoptimal}.

A vast body of literature exists for approximating $\phi(A)B$ via block Krylov subspace methods for symmetric and non-symmetric matrices $A$ \cite{FLS17}. Ideas based on Gauss-Radau quadratures have motivated the Radau-Lanczos method for the scalar (non-block) case \cite{FLSS17}, and the Radau-Arnoldi method for the non-symmetric block case \cite{FLS20}. Similarly to the Gauss-Radau definition in \cite{Lun18}, the Radau-Arnoldi method in \cite{FLS20} modifies $p$ Ritz values at each iteration to be larger than the largest eigenvalue of $A$. The considered functions $\phi(\cdot)$ in these works are Riemann-Stieltjes integrals, and we note that the results obtained for the resolvent in this work carry over straightforwardly to such functions.

Monotonicity results for the ordinary Lanczos algorithm were obtained for the exponential function in \cite{Druskin2008}; { see also similar results in \cite{Frommer2009}. Two-sided monotonic bounds for scalar Gauss and Gauss-Radau quadratures were first obtained in \cite{lot2008}. In this work we generalize these results to {matrix-valued} quadratures calculated via the block Lanczos algorithm}. For instance, using pairs of Gauss and Gauss-Radau quadratures has been explored for functions of adjacency matrices in \cite{GaussRadauBoundNetwork}. For the related conjugate gradient algorithm, Gauss-Radau based upper bounds for the scalar case have been developed in \cite{Meurant2023,MeurantBook}. For rational functions of matrices, error bounds for the classical Lanczos algorithms have been obtained in \cite{SimonciniErrorBound}. Recently, error bounds for approximating $\phi(A)B$ with the Lanczos method have been extended to the block Lanczos method \cite{Chen24}. A connection of the block Lanczos algorithm and matrix continued fraction similar to the one used in this work was described in the context of quantum physics for MIMO transfer function approximations in \cite{Jones1987TheRM}.

The remainder of the article is organized as follows: we review the basic properties of the block Lanczos algorithm in section~\ref{sec:BlLanc}, and we show that the resolvent of the Lanczos block tridiagonal matrix leads to a Stieltjes-matrix continued fraction in section~\ref{sec:GaussSfrac}. Using these continued fractions we define the notion of Gauss-Radau quadrature for the block case in section~\ref{sec:GaussRadau}. This is used in section~\ref{sec:ErrBound} to derive a two-sided error bound. Convergence acceleration based on averaged Gauss-Radau and Gauss quadrature is presented in the same section with a more detailed treatment in Appendix~\ref{sec:PostQuadErr}. Last, numerical examples are presented in section~\ref{sec:NumEx}.

\subsection{Notation}
Given two square symmetric matrices $G_1, G_2$ we use the notation
$G_1<G_2$ to mean that
the matrix $G_2-G_1$ is positive definite. 
A sequence of matrices $\{G_m\}_{m\ge 0}$ is said to be monotonically
increasing (resp. decreasing) if $G_m < G_{m+1}$ (resp. $G_{m+1}<G_m$)
for all $m$.
Positive-definite $p \times p$ matrices are denoted by Greek 
letters $\Alpha,\Beta,\gam$ and matrix-valued functions by calligraphic capital letters such as $\blC(s)$ or $\blF(s)$. 
Last, for $\Alpha, \Beta \in \mathbb{R}^{p\times p}$ and $\Beta$ is nonsingular,
{ we use the notation $\frac{\Alpha}{\Beta}:= \Alpha \Beta^{-1}$ (right inversion).}
The matrix $E_k \in \mathbb{R}^{mp\times p}$ has zero elements except for the
$p\times p$ identity matrix in the $k$-th block, $E_k =[0,\ldots, 0,I, \ldots, 0]^T$.

\section{Block Gauss Quadratures via the Block Lanczos Algorithm}\label{sec:BlLanc}

{ For scalar $\blF(s)$, i.e. {$p=1$}, the approximation of $\blF(s)=B^T (A+ sI)^{-1}B$ by means of
$\blF_m(s)=E_1^T (T_m+ sI)^{-1}E_1$, where $T_m$ is a specifically designed s.p.d. tridiagonal matrix, can be classically interpreted as a quadrature formula for integrals \cite{GaussRadauBoundNetwork,lot2008}. The block setting with $p>1$ and $T_m$ a specifically designed s.p.d. block tridiagonal matrix allows for an analogous block quadrature interpretation. 

Consider the eigendecompositions $A=Z\Lambda Z^T$ with eigenpairs $(\lambda_i,z_i)$ and $T_m=\tilde Z_m \tilde\Lambda_m \tilde Z_m^T$, $T_m \in \mathbb{R}^{mp\times mp}$ with eigenpairs $(\tilde\lambda_i,\tilde z_i)$ . Then we can rewrite $\blF(s)$ as an integral 
$$
B^T (A+ sI)^{-1}B = \sum_{i=1}^N \frac{(z_i^T B)^T(z_i^T B)}{\lambda_i+s} = \int_{0}^\infty \frac{1}{\lambda+s} \,{\rm d}\mu(\lambda),
$$
with symmetric, matrix valued spectral measure
$$
 \mu(\lambda)= \sum_{i=1}^{N}(z_i^T B)^T(z_i^T B) H(\lambda-\lambda_i),
$$
with $H(\cdot)$ the Heaviside step function. The approximation $\blF_m(s)$ of this integral can be written as
\be
E_1^T (T_m+ sI)^{-1}E_1 = \sum_{i=1}^{mp} \frac{(\tilde z_i^T E_1)^T(\tilde z_i^T E_1)}{\tilde \lambda_i+s}, 
\ee
which can be interpreted as a quadrature rule $\sum_i w_i \frac{1}{x_i+s}$ with weights $w_i=(\tilde z_i^T E_1)^T(\tilde z_i^T E_1)$ and nodes $x_i=\tilde \lambda_i$. Such a quadrature rule is said to be a Gauss quadrature rule if it exactly integrates polynomials of degree (strictly) less than $2m$. A quadrature rule is called Gauss-Radau quadrature rule if additionally, one quadrature node is fixed at either end of the integration interval. In this paper we consider matrices $A$ stemming from the discretization of a differential operator with continuous spectrum approaching $0$, hence we will fix a quadrature node at $x_0/\tilde \lambda_0 = 0$ \cite{GM10}.

 Gauss-Radau quadrature has been extended to the block case in Section~4.3 of \cite{Lun18} where $p$ quadrature nodes are prescribed that are distinct and {\it larger} than the maximum eigenvalue of $A$. In this extension of the Gauss-Radau quadrature to the block case, we propose to prescribe the first quadrature point at $0$ with quadrature weight $w_i$ of rank $p$.}

 {\begin{center}
\begin{minipage}{.65\linewidth}
 \begin{algorithm}[H]
\caption{Block Lanczos iteration}\label{alg:blockLanc}
\begin{algorithmic}
\normalsize
\State Given $m$, $A\in{\mathbb R}^{n\times n}$ s.p.d., 
$B\in{\mathbb R}^{n\times p}$ with orthonormal columns 
\State $Q_1 =B$
\State $W = AQ_1$
\State $\Alpha_1 = Q_1^T W$
\State $W = W - Q_1 \Alpha_1$
\For{$i= 2,\dots, m$} 
 	\State $Q_i\Beta_{i}=W$ \hskip 0.5in [QR decomposition of $W$]
	\State $W = AQ_i- Q_{i-1}\Beta_i^T$
 	\State $\Alpha_i = Q_i^H W$
 	\State $W = W - Q_i \Alpha_i$
\EndFor 
\end{algorithmic}
 \end{algorithm}
\end{minipage}
\end{center}}
\vspace{0.5cm}

To obtain a (block)-symmetric $T_m$ and a quadrature approximation the block Lanczos algorithm (Algoritm~\ref{alg:blockLanc}) is used. Let us assume that we can perform $m$ steps, with $mp \le n$, of 
the block Lanczos iteration without breakdowns 
or deflation\footnote{Deflation in the block recurrence is required if for some $m$, the generated space has dimension strictly less than $mp$. In that case, the redundant linearly dependent columns generated by the block Lanczos iteration need to be purged, and the block size reduced accordingly, see, e.g., \cite{Cullum.Willoughby.85}. In the numerical experiment presented in section~\ref{sec:NumEx} breakdown did not occur.} \cite{O'Leary1980,GOLUB1977}. As a result, 
the orthonormal block vectors $Q_i\in \mathbb{R}^{n \times p}$ form 
the matrix ${\bm Q}_m=[Q_1, \ldots, Q_m]\in \mathbb{R}^{n\times mp}$, whose columns
contain an orthonormal basis for the block Krylov subspace
\[\mathcal{K}_m(A,B)={\rm blkspan} \{B, AB, A^2 B , \dots A^{m-1} B \}.\]
The Lanczos iteration can then be compactly written as
\be\label{eq:LancRel}
A{\bm Q}_m={\bm Q}_m T_m + Q_{m+1} \Beta_{m+1}E_m^T, 
\ee
where $T_m$ is the symmetric positive definite block tridiagonal matrix 
\be\label{eq:T}
T_m=
\begin{pmatrix}
\Alpha_1 	& \Beta_2^T 	& {}		&{}			& {}& {}& {}\\
\Beta_2 	& \Alpha_2	& \Beta_3^T	&{}			& {}& {}& {}\\
{}			& \ddots 	& \ddots 	& \ddots 	& {}& {}& {}\\
{}			& {}		& \Beta_{i}& \Alpha_i & \Beta_{i+1}^T & {}& {}\\
{}			& {}		&{}			& \ddots 	& \ddots 	& \ddots& {}\\
{}			& {}		&{}			& {}	& \Beta_{m-1}& \Alpha_{m-1}& \Beta_m^T\\
{}			& {}		&{}			& {} 	& {} 	& \Beta_m& \Alpha_m\\
\end{pmatrix} ,
\ee
and $\Alpha_i, \Beta_i \in\mathbb{R}^{p \times p}$ are the block coefficients
in Algorithm~\ref{alg:blockLanc}. 

Using the Lanczos decomposition, $\blF(s)$ can be approximated as
\be\label{eq:blapprox}
 \blF(s)\approx \blF_m(s)= E_1^T\phi(T_m,s)E_1.
\ee
This approximation is known as a block Gauss quadrature rule.
In the following, we show that $\blF_m$ is monotonically increasing with 
$m$, i.e. $\blF_m< \blF_{m+1}<\blF(s)$ and thereby converges monotonically. Analogous results for the non-block case (i.e., $p=1$) for a larger class of functions were obtained in \cite{Druskin2008,lot2008,Frommer2009}. 
We also introduce a computable upper-bound matrix
$\blTF_m$ that monotonically decays as iterations increase, that is $\blTF_m>\blTF_{m+1}>\blF>\blF_m$. These two-sided bounds will then provide an error measure and allows us to reduce the approximation error using an averaging formula. The quantity $\blTF_m$ may be interpreted as an extension to $p>1$ of the approximation via a Gauss-Radau quadrature rule introduced in \cite{lot2008}.

\begin{remark}\label{rem2} 
	{ The simple Lanczos algorithm using only three-term recursions without re-orthogonalization 
 is known to be unstable due to computer round-offs. The instability is manifested by the loss of orthogonality of the Lanczos vectors and the appearance of spurious copies of the Lanczos eigenvalues. However, this instability only dampens the spectral adaptation of the approximation of actions of regular enough matrix functions and does not affect the converged approximation \cite{Greenbaum1989,druskin1995krylov,druskin1998using}. 
	Likewise, even though the spectral nodes and weights of the Gaussian quadratures computed via the simple Lanczos algorithm can be affected by round-off, the converged quadrature rule is not \cite{Knizhnerman1996TheSL,Golub1994}. Moreover, compared to the actions of matrix functions, the quadrature convergence is less affected by the loss of orthogonality of Lanczos vectors, because these vectors are not explicitly used in the computation of the Gaussian quadratures. We assume that the same reasoning applies to the block extensions as well to the Gauss-Radau quadrature, which we observe in our numerical experiments. Specifically, we assume that $T_m$ obtained from three-term block recursions is s.p.d. and (thereby) that its block Cholesky factorization exists. } 
	
\end{remark}

\section{Gauss quadrature and the matrix S-fraction} \label{sec:GaussSfrac}

We represent the block tridiagonal Lanczos matrix $T_m$ (see equation \eqref{eq:T}) in a 
block $LDL^T$ decomposition.

\begin{lemma}\label{lemma:T_LDLt}
	Let $T_m$ be as defined in equation~(\ref{eq:T}). Then
	there exist 
$\hKappa_{i}\in\RR^{p\times p}$, $\hKappa_1=I_p$ and $\gam_{i}\in\RR^{p\times p}$ all full rank such that
\begin{equation}\label{eq:defT}
T_{m} :=(\widehat{\boldsymbol K}_{m}^{-1})^T {J}_{m} \boldsymbol\Gamma^{-1}_m {J}_{m}^T \widehat{\boldsymbol K}_{m}^{-1}
\end{equation} 
where
\[
{J}_{m}^T = 
\begin{bmatrix}
I_p & -I_p & ~ & ~ \\
~ & \ddots& \ddots & ~ \\
 ~& ~& \ddots & -I_p \\
 ~ & ~&~& I_p 
\end{bmatrix}\in\RR^{pm\times pm},
	\begin{array}{ll}
\widehat{\boldsymbol K}_{m}&={\rm blkdiag}(\hKappa_{1},\dots,\hKappa_{m})\\
{\bm \Gamma}_m&={\rm blkdiag}(\gam_{1},\dots,\gam_{m}) ,
\end{array}
\]
and
$\Alpha_1=(\hKappa_{1}^{-1})^T\gam_1^{-1}\hKappa_{1}^{-1}=\gam_1^{-1}$, 
$\Alpha_i=(\hKappa_{i}^{-1})^T(\gam_{i-1}^{-1}+\gam_{i}^{-1})\hKappa_{i}^{-1}$ and 
$\Beta_i= -(\hKappa_{i}^{-1})^T \gam_{i-1}^{-1} \hKappa_{i-1}^{-1}$ for $i=2,\ldots,m$. 
\end{lemma}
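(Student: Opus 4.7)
The plan is to prove the factorization inductively on $m$, extracting one pair $(\hKappa_m, \gam_m)$ at each step by matching the new block row of the right-hand side of \eqref{eq:defT} against the corresponding new block row of $T_m$. For the base case $m=1$, both $J_1$ and $\widehat{\boldsymbol K}_1$ are the identity, so \eqref{eq:defT} reduces to $\gam_1^{-1}=\Alpha_1$, which admits a unique s.p.d.\ (hence full-rank) solution $\gam_1=\Alpha_1^{-1}$, consistent with the normalization $\hKappa_1=I_p$.

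For the inductive step, suppose the claim holds for $T_{m-1}$ with $\hKappa_1,\dots,\hKappa_{m-1}$ and $\gam_1,\dots,\gam_{m-1}$ all full rank. Because $\widehat{\boldsymbol K}_m^{-1}$ and $\boldsymbol\Gamma_m^{-1}$ are block diagonal while $J_m$ is block lower bidiagonal, a short direct computation shows that the leading $(m-1)\times(m-1)$ principal block of the right-hand side of \eqref{eq:defT} at level $m$ coincides with the corresponding expression at level $m-1$ and therefore equals $T_{m-1}$ by the inductive hypothesis. It thus suffices to match the newly added $(m,m-1)$ and $(m,m)$ blocks, and this yields the two relations
\[
\Beta_m = \hKappa_{m-1}^T\gam_{m-1}^{-1}\hKappa_m,\qquad \Alpha_m=\hKappa_m^T\bigl(\gam_{m-1}^{-1}+\gam_m^{-1}\bigr)\hKappa_m
\]
(up to a global sign convention in the off-diagonal block that can be absorbed into the QR step of Algorithm~\ref{alg:blockLanc}). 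I solve the first relation for $\hKappa_m=\gam_{m-1}\hKappa_{m-1}^{-T}\Beta_m$ and substitute into the second to obtain $\gam_m^{-1}=\hKappa_m^{-T}\Alpha_m\hKappa_m^{-1}-\gam_{m-1}^{-1}$.

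The main obstacle is establishing the full rank of $\hKappa_m$ and $\gam_m$. Full-rankness of $\hKappa_m$ is immediate, since it is a product of three full-rank matrices ($\Beta_m$ being full rank under the no-deflation hypothesis on the block Lanczos iteration). The nontrivial step is to show $\gam_m^{-1}\succ 0$, which I expect to establish by recognizing the right-hand side of the formula for $\gam_m^{-1}$ as a symmetrically conjugated Schur complement of the leading $p(m-1)\times p(m-1)$ principal block of $T_m$: once the first $m-1$ pivots of this generalized block-$LDL^T$ factorization have been extracted, what remains in the $(m,m)$-slot is exactly that Schur complement. Since $T_m$ is s.p.d., every Schur complement of a leading principal block is s.p.d., so $\gam_m^{-1}\succ 0$ and hence $\gam_m$ is full rank. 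This closes the induction and establishes \eqref{eq:defT}.
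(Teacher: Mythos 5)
Your proof is correct and takes essentially the same route as the paper's: both identify \eqref{eq:defT} as a reparameterization of the block $LDL^T$ factorization of the s.p.d.\ block tridiagonal $T_m$, with invertibility of the factors coming from positive definiteness of the successive pivots (equivalently, the Schur complements of the leading principal blocks). Your inductive construction merely spells out the existence argument that the paper cites as standard, and the sign ambiguity you flag in the off-diagonal block reflects an inconsistency already present in the paper's own statement (compare the lemma's relations with Algorithm~\ref{alg:GammaExtraction}) rather than a gap in your reasoning.
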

	\begin{proof}
The block $LDL^T$ decomposition for positive definite, tridiagonal matrices consists of a block bidiagonal $L$ with identity as its diagonal block and the block diagonal s.p.d. $D$. The parametrization in equation \eqref{eq:defT} is a general parameterization of such a $LDL^T$ decomposition with $L=(\widehat{\boldsymbol K}_{m}^{-1})^T {J}_{m} (\widehat{\boldsymbol K}_{m})^T$ and $D=(\widehat{\boldsymbol K}_{m}^{-1})^T \boldsymbol\Gamma^{-1}_m (\widehat{\boldsymbol K}_{m}^{-1})$.

The $p\times p$ matrices must be of full rank, since they are Cholesky factors of the 
s.p.d. Lanczos matrix \eqref{eq:defT}, which is of full rank. Further by the same reasoning, the (block)-diagonal factor ${\bm \Gamma}_m$ is s.p.d. since $D$ is s.p.d.\,.
\end{proof}

 We can extract the matrices $\gam_i$'s and $\hKappa_{i}$'s directly during the block Lanczos recurrences using the coefficients $\Alpha_i$'s and $\Beta_i>0$'s. This is reported in Algorithm~\ref{alg:ExtractGam}, which in a different parameterization was derived in Appendix~C of~\cite{ZaslavskySfraction}.
From their construction, we obtain that $\hKappa_i$, $\gam_{i}$ are full rank, as long as no deflation occurs
in the block Lanczos recurrence, as $\Beta_i^T\Beta_i>0$ for $i=2,\ldots,m$, i.e. nonsingularity of all coefficients $\Alpha_i,\Beta_i$ ensures that 
Algorithm~\ref{alg:GammaExtraction} runs without breakdowns. This algorithm is also related to the matrix continued fraction construction via block Lanczos algorithm used for computation of quantum scattering problems~\cite{Jones1987TheRM}.
 
 {\begin{center}
\begin{minipage}{.55\linewidth}
 \begin{algorithm}[H]
 	\caption{Extraction algorithm $\gam/\hKappa$:\\ (Block $LDL^T$ Cholesky factorization of block tridiagonal $T_m$)}\label{alg:GammaExtraction}
 	\begin{algorithmic}
 	\normalsize
 		\State Given $\Alpha_i,\Beta_i$ and $\hKappa_1=I_p$
 		\State $\gam_1^{-1}=\hKappa_{1}^T \Alpha_1 \hKappa_{1}$ 		
		 \For{$i= 2,\dots, m$} 
 		\State $\hKappa_{i}^{-1}\quad	={ -} \gam_{i-1}(\hKappa_{i-1})^T\Beta_i^T \quad\, \phantom{-}(*)$
 		\State $\gam_i^{-1}\quad=\phantom{-} (\hKappa_{i})^T \Alpha_i \hKappa_{i}- \gam_{i-1}^{-1}\quad \,(\dagger)$
 		\EndFor

 	\end{algorithmic}
 	\label{alg:ExtractGam}
 \end{algorithm}
 \end{minipage}
 \end{center}
 }
 \vskip 0.1in

To write $\blF_m$ as a matricial continued-S-fractions we first convert $T_m$ to pencil form using 
the factors introduced in \eqref{eq:defT}. To this end, we introduce the matrices $\hgam_j$
\be\label{eq:decompose}
\hgam_j =\hKappa_j^T \hKappa_j, \quad j=1,\ldots,m.
\ee
The matrices $\gam_j$ and $\hgam_j$ are known as the Stieltjes parameters \cite{Duykarev} and they are both s.p.d. if block Lanczos runs without breakdown.

Let $ Z_m:= {J}_m \bm\Gamma_m^{-1} {J}_{m}^T$ and $\bm{\widehat\Gamma}_m={\rm blkdiag}(\hgam_{1},\dots,\hgam_{m})$.
Then, due to the initial condition $\hgam_1=I_p$, the function $\blF_m$ can be rewritten using the pencil
form $(Z_m,\bm{\widehat\Gamma}_m)$, that is
\be\label{eq:Gauss}
\blF_m(s)=E_1^T(T_m+sI)^{-1}E_1= E_1^T( Z_m+s \widehat{\bm \Gamma}_m)^{-1}E_1.
\ee

Next, we show that $\blF_m$ is a matricial Stieltjes continued fraction (S-fraction) that can be written as
\be\label{eq:S-fraction1}
	\blF_m(s)= \cfrac{1}{s\hgam_1+ \cfrac{1}{\gam_1 + \cfrac{1}{s\hgam_2 +\cfrac{1}{ \ddots \cfrac{1}{s \hgam_m + \cfrac{1}{ \gam_m}} }}}} ,
\ee
where the basic building block of these S-fractions is given by the recursion
\be\label{eq:DefCFblock}
	\blC_i(s)=\cfrac{1}{s\hgam_i + \cfrac{1}{\gam_i+\blC_{i+1}(s)}} ,
	\qquad {\rm Re}\{s\}>0.
\ee

\begin{proposition}\label{prop:Pencil2Stieltjes}
Let the recursion $\blC_i(s)$, $i=m-1, \ldots, 1$ as in (\ref{eq:DefCFblock}) be given.
Then for $s\in \RR_+$ and $\blC_{m+1}(s) = 0$, 
	the function $\blF_m$ is a continued matrix S-fraction defined as
\be\label{eq:DefS}
	\blF_m(s):=\blC_1 . 
\ee
\end{proposition}

This result can be formally related to earlier works by Duykarev et al. \cite{Duykarev,dyukarev2012matrix} in matricial continued S-fraction.
The proof used here is related to the derivation by Ingerman (Section~2.4 \cite{IngermanDtN}).

\begin{proof}
We recall that 
$\blF_m(s)$ corresponds to the first $p\times p$
block of the solution ${ U}_m$ of the linear system
$$
( Z_m+s \widehat{\bm \Gamma}_m){ U}_m =E_1.
$$
Let ${ U}_m = [\blU_1; \ldots; \blU_m]$, with $\blU_i$ a $p\times p$ block.
We notice that $ U_m$ is full column rank.
Let us write down the relevant block equations of this system, that is
\begin{eqnarray}
(\frac 1 \gam_1+s\hat\gam_1) \blU_1 - \frac 1 \gam_1 \blU_2 &=&I_p \label{eqn:line1}\\
- \frac 1 {\gam_{i-1}} \blU_{i-1} +(\frac 1 {\gam_i} + 
\frac 1 {\gam_{i-1}} +s\hat\gam_i) \blU_i - \frac 1 {\gam_{i}} \blU_{i+1}
 &=&0, \quad
i=2,\ldots, m-1 \label{eqn:linei}\\
- \frac 1 {\gam_{m-1}} \blU_{m-1} +(\frac 1{\gam_{m}}+\frac 1 {\gam_{m-1}}+s\hat\gam_m) \blU_m &=& 0. \label{eqn:linem}
\end{eqnarray}

As a preliminary observation, we note that we can assume
that all $\blU_i$ are full rank.
Indeed, since the coefficient matrix is block tridiagonal and unreducible,
if there exists a nonzero vector $v$ such that
$\blU_{i}v=0$ for some $i$, the components of $\blU_1v$ can be obtained
by solving the $(i-1)\times (i-1)$ system with the top-left portion
of the coefficient matrix\footnote{This is related to having solved the associated linear system. Indeed, let ${\bm Q}_i$ be the block Lanczos 
basis, and $R_i=(A+sI) {\bm Q}_i {U}_i - B$ be the residual matrix associated with
the approximate solution $X_i={\bm Q}_i {U}_i$ to the linear system $(A+sI)X=B$.
Using the Lanczos relation \eqref{eq:LancRel}, it holds that $R_i={\bm Q}_i (T_i+sI) {U}_i - {\bm Q}_iE_1 +
Q_{i+1} {\bm \beta}_{i+1} E_i^T {U}_i = 
Q_{i+1} {\bm \beta}_{i+1} E_i^T {U}_i$, with ${U}_i=[\blU_1;\ldots; \blU_i]$, 
that is $R_i=Q_{i+1} {\bm \beta}_{i+1} \blU_i$.}.

For $1<i \le m$, let us formally define the operation
$$
\blC_i^{-1} \blU_i:= - \frac 1 {\gam_{i-1}} (\blU_i-\blU_{i-1}) .
$$
Note that this allows us to write
\begin{eqnarray}\label{eqn:C}
\blC_i^{-1}\blU_i = (\blC_i + \gam_{i-1})^{-1} \blU_{i-1}.
\end{eqnarray}
Rearranging terms for $i=m$ in the equation (\ref{eqn:linem}), it holds
$$
-\blC_m^{-1} \blU_m + s \hat\gam_m \blU_m + \frac 1 {\gam_m} \blU_m = 0, 
$$
so that since $\blU_m$ is non-singular, 
we obtain $\blC_m^{-1}= s \hat\gam_m + \frac 1 {\gam_m}$.
Reordering terms similarly in the $i$th equation (\ref{eqn:linei}), 
with $1<i < m$, we obtain
$$
-\blC_i^{-1}\blU_i + s \hat\gam_i \blU_i + \blC_{i+1}^{-1} \blU_{i+1} = 0.
$$
Using (\ref{eqn:C}) with index $i+1$ in place of $i$ and substituting, we
rewrite the equation above as
$$
-\blC_i^{-1}\blU_i + s \hat\gam_i \blU_i + (\blC_{i+1}+\gam_i)^{-1} \blU_{i} = 0,
$$
yielding the following recurrence for $\blC_i$ (for $\blU_i$ nonsingular)
$$
\blC_i = \frac 1 {s \hat\gam_i + \frac 1 {\blC_{i+1}+\gam_i}}.
$$
Rearranging also the first equation (\ref{eqn:line1}), we have
$$
s\hat\gam_1 \blU_1 - \frac 1 \gam_1(\blU_2 - \blU_1) = I_p,
$$
so that $s\hat\gam_1 \blU_1 + \blC_2^{-1} \blU_2 = I_p$.
Using (\ref{eqn:C}), we obtain
$s\hat\gam_1 \blU_1 + (\blC_2+\gam_1)^{-1} \blU_1 = I_p$,
from which we get the final relation,
$$
\blU_1 = \frac 1 {s\hat\gam_1+\frac 1 {\blC_2+\gam_1}} \equiv \blC_1.
$$
\end{proof}

\section{Block Gauss-Radau quadrature}\label{sec:GaussRadau}
Let us modify the lowest floor of \eqref{eq:S-fraction1} by replacing $\gam_m$ with $\frac{\gam_m}{\epsilon}$, $\epsilon >0$ and keeping the remaining Stieltjes parameters unchanged. In the limit of $\epsilon \to 0$ such a modification yields 
	\be\label{eq:S-fraction2}
	\blTF_m(s) =\underset{{\epsilon\to 0}}{\lim}
	\cfrac{1}{s\hgam_1+ \cfrac{1}{\gam_1 + \cfrac{1}{s\hgam_2 +\cfrac{1}{ \ddots \cfrac{1}{s \hgam_m + \cfrac{\epsilon}{ \gam_m}} }}}}
	= \cfrac{1}{s\hgam_1+ \cfrac{1}{\gam_1 + \cfrac{1}{s\hgam_2 +\cfrac{1}{ \ddots \cfrac{1}{s \hgam_m } }}}}.
	\ee
Equivalently, such a limiting transition can be written using the resolvent of a block tridiagonal matrix as
	\[\blTF_m(s)=E_1^T(\tT_m+sI)^{-1}E_1,\]
	where 
	\be\label{eq:tT}
	\tilde T_m=
	\begin{pmatrix}
		\Alpha_1 	& \Beta_2^T 	& {}		&{}			& {}& {}& {}\\
		\Beta_2 	& \Alpha_2	& \Beta_3^T	&{}			& {}& {}& {}\\
		{}			& \ddots 	& \ddots 	& \ddots 	& {}& {}& {}\\
		{}			& {}		& \Beta_{i}& \Alpha_i & \Beta_{i+1}^T & {}& {}\\
		{}			& {}		&{}			& \ddots 	& \ddots 	& \ddots& {}\\
		{}			& {}		&{}			& {}	& \Beta_{m-1}& \Alpha_{m-1}& \Beta_m^T\\
		{}			& {}		&{}			& {} 	& {} 	& \Beta_m& \tilde \Alpha_m\\
	\end{pmatrix} ,
	\ee
	and $\Alpha_i$, $i=1,\ldots, m-1$ and $\Beta_i$, $i=2,\ldots, m$ are the same block coefficients as
	in Algorithm~\ref{alg:blockLanc},	
	with 
	\be 
	\label{eq:alpham0}
	\tilde \Alpha_m=\lim_{\epsilon \to 0}\Alpha_m(\epsilon)=
	 \lim_{\epsilon \to 0} (\hKappa_{m}^{-1})^T\left[ \gam_{m-1}^{-1}+ \left({\frac{\gam_m}{\epsilon}}\right)^{-1}\right]\hKappa_{m}^{-1}=
	(\hKappa_{m}^{-1})^T\gam_{m-1}^{-1}\hKappa_{m}^{-1},
	\ee using $\gam_m$, $\hKappa_{m}$ computed via Algorithm~\ref{alg:GammaExtraction}. The following proposition is fundamental for understanding the distinct properties of $\blF_m$ and $\blTF_m$ and the relation of $\blTF_m$ to the Gauss-Radau quadrature.

\vskip 0.1in
\begin{proposition}\label{lem03}
	The matrix function $\blF_m(s)$ is analytical and of full rank at $s=0$ as
	\be\label{eq:S(0)} 
	\blF_m(0)=\sum_{i=1}^m\gam_i. 
	\ee
	Further, $\blTF_m(s)$ has a pole of order one at $s=0$ with a 
	{ residue matrix of rank $p$}, specifically

	\be\label{eq:stS(0)}
	\lim_{s\to 0}(s\blTF_m)=\left(\sum_{i=1}^{m}
	\hgam_i\right)^{-1}. 
	\ee
\end{proposition}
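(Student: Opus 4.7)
The plan is to work directly with the block $LDL^T$ factorization from Lemma~\ref{lemma:T_LDLt} rather than iterating on the S-fraction. Writing
\[
T_m^{-1} = \widehat{\boldsymbol K}_{m} (J_m^T)^{-1} \bm\Gamma_m J_m^{-1} \widehat{\boldsymbol K}_{m}^T,
\]
and noting that $\widehat{\boldsymbol K}_{m}^T E_1 = E_1$ since $\hKappa_1 = I_p$, I would reduce Part~1 to computing
\[
\blF_m(0) = E_1^T T_m^{-1} E_1 = E_1^T (J_m^T)^{-1} \bm\Gamma_m J_m^{-1} E_1.
\]
The triangular structure of $J_m$ makes both $(J_m^T)^{-1}$ and $J_m^{-1}$ explicit: $J_m^{-1} E_1$ is the block column $[I_p, I_p,\ldots, I_p]^T$, and the first block row of $(J_m^T)^{-1}$ is $[I_p, I_p, \ldots, I_p]$. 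Inserting $\bm\Gamma_m$ in between collapses the product to the telescoping sum $\sum_{i=1}^m \gam_i$, which yields \eqref{eq:S(0)}. The analyticity at $s=0$ is immediate because $T_m$ is s.p.d., so $(T_m + sI)^{-1}$ is holomorphic in a neighborhood of $0$, and full-rankness of $\sum_{i=1}^m \gam_i$ follows from s.p.d.-ness of each $\gam_i$ (Lemma~\ref{lemma:T_LDLt}).

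For Part~2 the key observation is that the limiting procedure in \eqref{eq:S-fraction2} is equivalent to replacing $\gam_m^{-1}$ by $0$ in the central block-diagonal factor. Concretely, $\widetilde T_m$ inherits the factorization
\[
\widetilde T_m = (\widehat{\boldsymbol K}_{m}^{-1})^T J_m \, \widetilde{\bm\Gamma}_m^{-1}\, J_m^T \widehat{\boldsymbol K}_{m}^{-1}, \qquad
\widetilde{\bm\Gamma}_m^{-1} = \mathrm{blkdiag}(\gam_1^{-1}, \ldots, \gam_{m-1}^{-1}, 0),
\]
so $\widetilde T_m$ is rank-deficient with a $p$-dimensional null space. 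Chasing this nullity through the invertible factors $J_m^T$ and $\widehat{\boldsymbol K}_m^{-1}$ is the main technical step: the vectors $v$ with $\widetilde T_m v = 0$ are exactly those for which $J_m^T \widehat{\boldsymbol K}_{m}^{-1} v$ lies in the last block, i.e.\ equals $E_m w$ for some $w\in\RR^p$. Using that the last block column of $(J_m^T)^{-1}$ is $[I_p,\ldots,I_p]^T$, I conclude that the null space is spanned by the columns of
\[
V_0 := \bigl[\,\hKappa_1^T,\; \hKappa_2^T,\; \ldots,\; \hKappa_m^T\,\bigr]^T \in \RR^{mp\times p},
\]
which are linearly independent because $\hKappa_1 = I_p$.

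Finally, to extract the residue I would use the spectral calculus: since $\widetilde T_m$ is symmetric and positive semidefinite with null space $\mathrm{range}(V_0)$,
\[
\lim_{s\to 0} s(\widetilde T_m + sI)^{-1} = P_0 := V_0 (V_0^T V_0)^{-1} V_0^T.
\]
Substitution yields $\lim_{s\to 0} s\,\blTF_m(s) = (E_1^T V_0)(V_0^T V_0)^{-1}(V_0^T E_1)$. With $E_1^T V_0 = \hKappa_1 = I_p$ and $V_0^T V_0 = \sum_{i=1}^m \hKappa_i^T \hKappa_i = \sum_{i=1}^m \hgam_i$ by \eqref{eq:decompose}, I recover \eqref{eq:stS(0)}. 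The residue has rank exactly $p$ because $V_0$ has full column rank and the Gram matrix $\sum_i \hgam_i$ is s.p.d.

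The main obstacle is the null space identification for $\widetilde T_m$: the rank-$p$ degeneracy is concentrated in the middle factor $\widetilde{\bm\Gamma}_m^{-1}$, but its image under the two outer triangular factors has to be computed carefully in order to produce the clean columnwise formula $V_0 = [I_p; \hKappa_2;\ldots;\hKappa_m]$ that makes both $E_1^T V_0$ and $V_0^T V_0$ collapse to the desired expressions.
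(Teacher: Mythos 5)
Your proof is correct, but it follows a genuinely different route from the paper's. The paper proves both identities by induction on the levels of the S-fraction: it evaluates the recursion \eqref{eq:DefCFblock} at $s=0$ to get $\blC_m(0)=\gam_m$ and $\blC_{i-1}(0)=\blC_i(0)+\gam_{i-1}$ for \eqref{eq:S(0)}, and analogously accumulates $(\lim_{s\to0}s\blC_i)^{-1}=\hgam_i+(\lim_{s\to0}s\blC_{i+1})^{-1}$ for \eqref{eq:stS(0)} --- three lines, entirely inside the continued-fraction formalism. You instead exploit the pencil/$LDL^T$ structure of Lemma~\ref{lemma:T_LDLt}: the explicit inverses of the bidiagonal $J_m$ collapse $E_1^T T_m^{-1}E_1$ to the telescoping sum $\sum_i\gam_i$, and for the Gauss--Radau part you identify $\tilde T_m$ with the factorization in which $\gam_m^{-1}$ is zeroed (consistent with \eqref{eq:alpham0}), compute the null space explicitly as $\mathrm{range}(V_0)$ with $V_0=[\hKappa_1;\ldots;\hKappa_m]$, and use the spectral-calculus limit $\lim_{s\to0}s(\tilde T_m+sI)^{-1}=V_0(V_0^TV_0)^{-1}V_0^T$. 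All steps check out (including $\widehat{\boldsymbol K}_m^TE_1=E_1$ from $\hKappa_1=I_p$ and $V_0^TV_0=\sum_i\hgam_i$ via \eqref{eq:decompose}). What your route buys: it delivers Corollary~\ref{prop:GR} (positive semidefiniteness of $\tilde T_m$ and its $p$-dimensional null space) as an intermediate step with an explicit null-space basis, rather than as a consequence of the residue formula, and it avoids any appeal to the S-fraction representation, whose proof (Proposition~\ref{prop:Pencil2Stieltjes}) is only sketched in the paper. The cost is the extra bookkeeping of chasing the rank deficiency through the triangular factors, which the paper's recursion sidesteps.
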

\vskip 0.1in
\begin{proof}
	The basic building block of the considered S-fractions is given in \eqref{eq:DefCFblock},	which allows us to succinctly define the truncated matrix continued S-fraction~\eqref{eq:S-fraction2} recursively as
		\be\label{eq:DefTildeS}
		\blTF_m(s):=\blC_1, \quad {\rm for}\quad {\blC_{m+1}(s) = +\infty},
		\ee
		in the same fashion we defined $\blF_m(s)$ in equation \eqref{eq:S-fraction2} via \eqref{eq:DefS}.

	We will start with \eqref{eq:S(0)} and use the
	recursions \eqref{eq:DefCFblock} and \eqref{eq:DefS} to compute $\blF_m(0)$. First, we notice that $\blC_m(0)=\gam_m$. Then for $i\le m$ we get $\blC_{i-1}=\blC_{i}+\gam_{i-1}$ which proves
	\eqref{eq:S(0)} by induction.
	
	Likewise, \eqref{eq:stS(0)} can be obtained from the recursion \eqref{eq:DefCFblock} and \eqref{eq:DefTildeS}.
	We will start with computing $(\lim_{s\to 0} s\blC_{m})^{-1}=\hgam_{m}$ and then get the recursion
	\[(\lim_{s\to 0} {s\blC_{i-1}})^{-1}=\hgam_{i-1}+(\lim_{s\to 0} {s\blC_{i}})^{-1} \text{ for } i=m-2,\ldots,2.\] Then \eqref{eq:stS(0)} will be computed by substituting $\blC_1$ into the last equality of \eqref{eq:DefTildeS}. The inverse of the sum of s.p.d. matrices is of full rank, which concludes the proof.
\end{proof}

	The poles of $\blTF_m$ (as of any matrix S-fraction) are real non-positive and the pole at $s=0$ is of first order with residue matrix of rank $p$, which follows from the second result of Proposition~\ref{lem03}. Therefore, $\tT_m$ has eigenvalue zero with multiplicity $p$ and we arrive at the following corollary. 
	
	\begin{corollary}\label{prop:GR}
		The symmetric block tridiagonal matrix $\tT_m$ is semi-definite with $p$-dimensional null space.
	\end{corollary}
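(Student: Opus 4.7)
The plan is to bound the dimension of the null space of $\tT_m$ from below and the number of non-positive eigenvalues of $\tT_m$ from above, and to observe that the two bounds coincide and both equal $p$.

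For the upper bound, I would first read off from equation~\eqref{eq:alpham0} (by letting $\epsilon$ run from $1$ to $0$) that $\Alpha_m - \tilde\Alpha_m = \hKappa_m^{-T}\gam_m^{-1}\hKappa_m^{-1}$, so that $\tT_m$ differs from $T_m$ only in the last diagonal block and
\[
T_m - \tT_m \;=\; E_m\,\bigl(\hKappa_m^{-T}\gam_m^{-1}\hKappa_m^{-1}\bigr)\,E_m^T
\]
is symmetric positive semidefinite of rank exactly $p$, since $\gam_m$ is s.p.d.\ and $\hKappa_m$ is invertible. Factoring this perturbation as $X X^T$ with $X\in\RR^{mp\times p}$ of full column rank, one has $y^T\tT_m y = y^T T_m y - \|X^T y\|^2 = y^T T_m y > 0$ for every nonzero $y$ in the $(m-1)p$-dimensional subspace $W=(\mathrm{range}\,X)^\perp$, because $T_m$ is s.p.d.\ under the no-breakdown assumption. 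Sylvester's law of inertia then forces $\tT_m$ to have at least $(m-1)p$ strictly positive eigenvalues, hence at most $p$ non-positive ones.

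For the lower bound, the spectral decomposition $\tT_m=\sum_i\lambda_i v_iv_i^T$ gives
\[
\blTF_m(s)\;=\;E_1^T(\tT_m+sI)^{-1}E_1\;=\;\sum_i \frac{(E_1^Tv_i)(E_1^Tv_i)^T}{\lambda_i+s},
\]
so that $\lim_{s\to 0}\bigl(s\,\blTF_m(s)\bigr)= E_1^T P_{\ker\tT_m}E_1$ is a Gram matrix whose rank cannot exceed $\dim\ker\tT_m$. By Proposition~\ref{lem03}, this residue equals the s.p.d.\ matrix $\bigl(\sum_{i=1}^m\hgam_i\bigr)^{-1}$, which has rank $p$; hence $\dim\ker\tT_m \geq p$.

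Combining the two bounds, at least $p$ eigenvalues of $\tT_m$ vanish and at most $p$ are non-positive; together these force exactly $p$ zero eigenvalues and no negative ones, which is the claim. The only slightly delicate point is reading off the explicit rank-$p$ form of $T_m-\tT_m$ from the $LDL^T$ parametrization of Section~\ref{sec:GaussSfrac} and the definition of $\tilde\Alpha_m$; once that identity is in hand, the inertia argument and the residue-rank computation are each essentially one line.
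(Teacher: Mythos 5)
Your proof is correct, and it takes a genuinely different, more self-contained route than the paper's. The paper disposes of the corollary in one sentence by appealing to matrix S-fraction theory: all poles of $\blTF_m$ are real non-positive, and by \eqref{eq:stS(0)} the pole at $s=0$ has a rank-$p$ residue, hence $\tT_m$ is positive semi-definite with a $p$-dimensional kernel. That argument implicitly uses the fact that every eigenvalue of the block-Jacobi matrix $\tT_m$ actually shows up as a pole of $E_1^T(\tT_m+sI)^{-1}E_1$, i.e.\ that no eigenspace is orthogonal to the range of $E_1$ (true because the off-diagonal blocks $\Beta_i$ are nonsingular, but not spelled out). Your argument sidesteps this entirely: the identity $T_m-\tT_m=E_m(\hKappa_m^{-1})^T\gam_m^{-1}\hKappa_m^{-1}E_m^T$ — which the paper itself uses, in the weaker form $T_m-\tT_m\ge 0$, in the proof of Proposition~\ref{prop:03} — exhibits $\tT_m$ as a rank-$p$ negative semidefinite perturbation of the s.p.d.\ matrix $T_m$, so at most $p$ eigenvalues of $\tT_m$ are non-positive; the residue computation \eqref{eq:stS(0)} gives $\dim\ker\tT_m\ge\operatorname{rank}\bigl(E_1^TP_{\ker\tT_m}E_1\bigr)=p$; and the two bounds meet at exactly $p$ zero eigenvalues and no negative ones. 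What your approach buys is elementary linear algebra in place of Stieltjes/pole-location theory, plus an explicit handling of the ``visibility'' issue the paper glosses over. Two cosmetic remarks: the step you attribute to ``Sylvester's law of inertia'' is really the Courant--Fischer corollary that a $d$-dimensional subspace on which the quadratic form is positive definite forces at least $d$ positive eigenvalues (inertia proper concerns congruence, not additive perturbation), though the conclusion is of course right; and when reading off $\Alpha_m-\tilde\Alpha_m$, note that the displayed formula for $\Alpha_i$ in Lemma~\ref{lemma:T_LDLt} is inconsistent with \eqref{eq:alpham0} and Algorithm~\ref{alg:GammaExtraction} (the factors $\hKappa_i$ there should be $\hKappa_i^{-1}$); your reading, based on \eqref{eq:alpham0}, is the consistent one.
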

	
Because it is obtained from $T_m$ by modifying only the last diagonal block element $\tilde{\Alpha}$, $\tilde \blF_m$ can be viewed as a natural extension of the Gauss-Radau quadrature introduced in \cite{lot2008} for $p=1$, thus we will call it block Gauss-Radau quadrature. 

\begin{remark}
 Other choices of $\tilde \Alpha_m$ that lead to block Gauss-Radau rules are discussed in \cite{Lun18}, where $p$ distinct eigenvalues of the block tridiagonal matrix $\tilde T_m$ are prescribed by solving a block tridiagonal system. Here we consider discretizations of differential operators with a contentious spectrum where the lowest eigenvalue tends to zero such that we choose $p$ eigenvalues at zero.
\end{remark}

\begin{remark}\label{rem:Nullspace}
The theory presented in this manuscript was developed for s.p.d. matrices. In the case $A$ is a nonnegatvive definite matrix and the intersection of the column space of $B$ with the null space of $A$ is empty ($\mathrm{col}(B) \cap \mathrm{null}(A) = \{\varnothing\}$) we can still use block Lanczos algorithms to approximate the considered quadratic forms. In exact arithmetic $T_m$ would be positive definite for this case, a property that is lost in computer arithmetic. However, a spectral correction can be applied to the eigenvalues of $T_m$ that are not positive by expanding the spectral projectors introduced for the non-block case in \cite{1994RadioSc} to the block case. Essentially, eigenvalues below a spectral threshold are shifted to the threshold and block Lanczos is rerun on the spectrally corrected matrix to restore the block tridiagonal structure. In the numerical experiments presented in sections \ref{sec:EMdiff} and \ref{sec:graphL} the considered matrix $A$ is nonnegative definite, yet no spectral corrections were needed.
\end{remark}

 \section{Two-sided error bounds and convergence acceleration via averaging of Gauss and Gauss-Radau rules}\label{sec:ErrBound}

 \subsection{Monotonically convergent lower and upper bounds}
 In the absence of deflation, Gaussian quadrature $\blF_m(s)$, as all Krylov approximations eventually do, converges to $\blF(s)$. Gauss-Radau quadrature $\blTF_m(s)$ differs from $\blF_m(s)$ only by the absence of the lower floor in the S-fraction representation, thus it also converges to the same limit. We next prove that the two sequences give monotonic two-sided bounds and that their difference defines an upper bound.

 \vskip 0.1in
 \begin{theorem}\label{lem01}
 	For a given real positive $s$, the following holds,
 	
 	i) The matrix sequence $\{ \blF_m(s)\}_{m\geq 1}$ is increasing, i.e. 
 	$\blF_m(s)<\blF_{m+1}(s)$;
 	
 	ii) The matrix sequence $\{ \blTF_m(s)\}_{m\geq 1}$ is decreasing, 
 	i.e. $\blTF_m(s)>\blTF_{m+1}(s)$.
 	
 \end{theorem}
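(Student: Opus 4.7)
The plan is to exploit the S-fraction representations of Proposition~\ref{prop:Pencil2Stieltjes} and \eqref{eq:DefTildeS} and reduce the monotonicity statements to a single operator-monotonicity lemma applied repeatedly. The crucial structural observation is that the block-Lanczos Stieltjes parameters $\gam_1,\hgam_1,\ldots,\gam_m,\hgam_m$ coincide for $\blF_m,\blTF_m,\blF_{m+1},\blTF_{m+1}$ (they are computed by Algorithm~\ref{alg:GammaExtraction} before the last floor is added), so the four quantities share the upper $m$ floors of the continued fraction and differ only in the boundary value assigned at level $m+1$:
\[
\text{Gauss:}\ \blC_{m+1}^{\rm G}=0,\quad \blC_{m+1}^{\rm G,new}=(s\hgam_{m+1}+\gam_{m+1}^{-1})^{-1};\qquad
\text{Radau:}\ \blC_{m+1}^{\rm R}=\infty,\quad \blC_{m+1}^{\rm R,new}=(s\hgam_{m+1})^{-1}.
\]

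The key lemma to prove is operator monotonicity (in the Löwner order) of the basic building block
\[
f_i(X)=\Bigl(s\hgam_i+(\gam_i+X)^{-1}\Bigr)^{-1},\qquad X\ge 0,
\]
viewed as a map of symmetric positive semidefinite $p\times p$ matrices. This follows from the standard facts that $Y\mapsto Y^{-1}$ is operator-antitone on s.p.d. matrices and $Y\mapsto Y+C$ is operator-monotone for any fixed symmetric $C$: if $0\le X_1\le X_2$ then
\[
(\gam_i+X_1)^{-1}\ge(\gam_i+X_2)^{-1}\Rightarrow s\hgam_i+(\gam_i+X_1)^{-1}\ge s\hgam_i+(\gam_i+X_2)^{-1}\Rightarrow f_i(X_1)\le f_i(X_2),
\]
using $s>0$ and $\hgam_i,\gam_i\succ0$ to ensure all inverses exist. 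The same argument extends to the limiting value $X=\infty$ by interpreting $f_i(\infty)=(s\hgam_i)^{-1}$.

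For part (i), I compare $\blF_m$ and $\blF_{m+1}$ at the $(m+1)$-st floor: $\blC_{m+1}^{\rm G}=0\prec(s\hgam_{m+1}+\gam_{m+1}^{-1})^{-1}=\blC_{m+1}^{\rm G,new}$ since the right-hand side is s.p.d. Applying the composition $f_1\circ f_2\circ\cdots\circ f_m$, which is operator-monotone increasing by the lemma, yields $\blF_m=\blC_1^{\rm G}<\blC_1^{\rm G,new}=\blF_{m+1}$. For part (ii), the comparison at the $(m+1)$-st floor gives $\blC_{m+1}^{\rm R}=\infty\succ(s\hgam_{m+1})^{-1}=\blC_{m+1}^{\rm R,new}$, and the same composition propagates the reversed inequality upward to $\blTF_m>\blTF_{m+1}$.

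The main obstacle is a cosmetic one: making rigorous the use of ``$\infty$'' as a boundary value for the Radau case. I would handle this by working with the $\epsilon$-parameterized matrix $\tilde\Alpha_m(\epsilon)$ of \eqref{eq:alpham0} so that $\blTF_m$ is the limit as $\epsilon\to 0^+$ of an S-fraction with finite bottom $\gam_m/\epsilon$, verifying the inequality $(\gam_m/\epsilon+\blC_{m+1}^{\rm R,new})^{-1}\prec(\gam_m/\epsilon)^{-1}$ for all $\epsilon>0$ via the operator monotonicity above, and then passing to the limit using continuity of the compositions on s.p.d.~matrices. Once the $\epsilon\to 0$ limit is justified, both parts reduce to the single chain of operator-monotone inequalities described above.
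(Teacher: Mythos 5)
Your proof is correct and follows essentially the same route as the paper: both arguments reduce the comparison to the monotonicity (in the L\"owner order) of the S-fraction building block $X \mapsto \bigl(s\hat{\boldsymbol\gamma}_i + (\boldsymbol\gamma_i + X)^{-1}\bigr)^{-1}$ and propagate a single inequality at the bottom floor ($0$ versus a positive-definite block for Gauss, $\infty$ versus a finite block for Gauss--Radau) up through the shared upper floors of the continued fraction. Your write-up is in fact somewhat more explicit than the paper's, in that you isolate the operator-monotonicity lemma, note that the Stieltjes parameters are unchanged between consecutive iterations, and treat the $\boldsymbol{\mathcal C}_{m+1}=\infty$ terminal condition rigorously via the $\epsilon$-limit.
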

 
 \vskip 0.1in
 \begin{proof} { From the recursion 
 		\eqref {eq:DefCFblock} with final condition given by \eqref {eq:DefS} 
 		$\forall s>0$ and $i<m+1$ we get $\blC_i(s)>0$.
 		From \eqref{eq:DefCFblock} it follows that we get 
 		$\blF_{m-1}$ if we impose $\blC_{m}=0$. 
 		From \eqref{eq:DefCFblock} it also follows
 		that $\blC_i$ is monotonic with respect to $\blC_{i+1}$ ($\blC_{i+1}$ in the denominator of $\blC_{i}$'s denominator), so the replacement of the final condition in \eqref {eq:DefS} by $\blC_{m}=0$ decreases $\blC_{m-1}$, that consequentially decreases $\blC_{m-2}$, etc., and by induction we obtain $\blF_{m-1}<\blF_m$. } 
 	
 	For proving (ii), we recall that
 	one can get $\blTF_{m-1}$ by terminating the S-fraction with $\blC_{m}=+\infty$. 
 	Then with the same monotonicity argument as for $\blF_m$, it follows 
 	that such a substitution recursively increases $\blC_i$, $i=m-1,m-2,\ldots, 1 $.
 \end{proof}

Now we are ready to formulate a two-sided Gauss-Radau-type estimate.

\vskip 0.1in
\begin{proposition}\label{prop:03}
{ Let Algorithm~\ref{alg:blockLanc} produce 
	$\Beta_j^T\Beta_j>0$ for $j\le m$. Then
	\[
	0<\blF_{m-1}(s)<\blF_{m}(s)< \blF(s)< \tilde\blF_{m}(s)<\tilde \blF_{m-1}(s) \quad \forall s\in\RR_+.
	\]}
\end{proposition}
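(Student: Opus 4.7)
The two monotonicity statements $\blF_{m-1}(s)<\blF_m(s)$ and $\tilde\blF_m(s)<\tilde\blF_{m-1}(s)$ are already covered by Theorem~\ref{lem01}, so my plan reduces to establishing the positivity $0<\blF_{m-1}(s)$ and the two-sided sandwich $\blF_m(s)<\blF(s)<\tilde\blF_m(s)$. Positivity is immediate: $T_{m-1}=\bm Q_{m-1}^T A\,\bm Q_{m-1}$ is s.p.d.\ (a principal submatrix of the s.p.d.\ $T_m$, or equivalently a Rayleigh compression of the s.p.d.\ $A$), so $T_{m-1}+sI$ is s.p.d.\ for $s\in\RR_+$, and since $E_1\in\RR^{(m-1)p\times p}$ has full column rank $p$, the matrix $\blF_{m-1}(s)=E_1^T(T_{m-1}+sI)^{-1}E_1$ is positive definite.

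For the sandwich, my plan is to represent $\blF(s)$ itself as a value of the same S-fraction equipped with an ``exact continuation'' of its bottom floor, and then to invoke the monotonicity of $\blC_1$ in $\blC_{m+1}$ that already powers the proof of Theorem~\ref{lem01}. The hypothesis $\Beta_j^T\Beta_j>0$ for $j\le m$ permits continuing block Lanczos past step $m$ until a terminal index $m_0\ge m$ is reached at which either $\Beta_{m_0+1}=0$ (happy breakdown) or $m_0 p=n$ (exhaustion of $\RR^n$). At $m_0$ the block Krylov subspace $\mathcal K_{m_0}(A,B)$ is $A$-invariant and $\bm Q_{m_0}E_1=B$, so the standard Krylov identity
\[
\blF_{m_0}(s)=E_1^T(T_{m_0}+sI)^{-1}E_1 = B^T\bm Q_{m_0}(T_{m_0}+sI)^{-1}\bm Q_{m_0}^T B = B^T(A+sI)^{-1}B = \blF(s)
\]
holds exactly. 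Running Algorithm~\ref{alg:GammaExtraction} through step $m_0$ and then applying \eqref{eq:DefCFblock} from floor $m_0$ down to floor $m+1$ with terminal condition $\blC_{m_0+1}=0$ produces a value $\blC^{\star}_{m+1}$ such that $\blF(s)=\blC_1$ via \eqref{eq:DefS} applied at level $m_0$. A short induction on \eqref{eq:DefCFblock}, starting from the s.p.d.\ block $\blC_{m_0}=(s\hgam_{m_0}+\gam_{m_0}^{-1})^{-1}$ and using that every $\gam_i$ and $\hgam_i$ is s.p.d.\ in the no-breakdown regime, then guarantees $0<\blC^{\star}_{m+1}<+\infty$.

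With this in hand, the conclusion follows by comparing three truncations of the same S-fraction: the choices $\blC_{m+1}=0$, $\blC_{m+1}=\blC^{\star}_{m+1}$, and $\blC_{m+1}=+\infty$ (the last understood as the $\epsilon\to 0$ limit in \eqref{eq:S-fraction2}) produce $\blF_m$, $\blF$, and $\tilde\blF_m$ respectively, and the strict ordering $0<\blC^{\star}_{m+1}<+\infty$ together with the monotonicity of $\blC_1$ in $\blC_{m+1}$ immediately yields $\blF_m(s)<\blF(s)<\tilde\blF_m(s)$, closing the chain.

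The main technical obstacle I anticipate is formalizing the terminal step $m_0$ in the block setting when some $\Beta_j$ with $j>m$ happens to be only partially rank-deficient: a deflated variant of block Lanczos would then have to play the role of the full-rank continuation, and Algorithm~\ref{alg:GammaExtraction} would need to be re-derived on the deflated subsequence. This is a standard adjustment in the happy-breakdown regime and, while tedious to write out carefully, does not disrupt the structural ``S-fraction monotonicity plus exact continuation'' skeleton sketched above.
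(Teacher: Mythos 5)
Your treatment of positivity and of the two monotonicity claims coincides with the paper's (both defer to Theorem~\ref{lem01} and to the positive definiteness of the $\blC_i$), but for the central sandwich $\blF_m(s)<\blF(s)<\tilde\blF_m(s)$ you take a genuinely different route. The paper obtains the lower bound from the monotone convergence $\blF_{m}\nearrow\blF$ (asserted, not re-derived) and the upper bound indirectly: it proves $\tilde\blF_m-\blF_m\ge 0$ via the integral identity $\tilde\blF_m(s)-\blF_m(s)=-\int_0^1 V(r)^T(\tilde T_m-T_m)V(r)\,dr$ with $V(r)=[T_m+r(\tilde T_m-T_m)+sI]^{-1}E_1$ and $T_m-\tilde T_m\ge 0$, and then squeezes $\blF\le\tilde\blF_m$ by combining this with the monotonicity of both sequences and the convergence of $\blF_{m'}$ for $m'>m$. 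You instead embed $\blF$ itself into the S-fraction family by running Lanczos to exact termination at $m_0$, realizing $\blF$ as the depth-$m$ fraction with terminal block $\blC^\star_{m+1}$, so that the three terminal values $0<\blC^\star_{m+1}<+\infty$ together with the strict monotone dependence of $\blC_1$ on the terminal block deliver both bounds symmetrically and with strictness for free. What your version buys is a cleaner, limit-free argument; what it costs is exactly the deflation issue you flag at the end, and I would weigh that more heavily than you do: if some $\Beta_j$ with $j>m$ is rank-deficient but nonzero, the fixed-block-size S-fraction representation of $\blF$ (Lemma~\ref{lemma:T_LDLt} and Algorithm~\ref{alg:GammaExtraction}) does not exist as written and a variable-block-size variant must actually be constructed, which is more than a routine adjustment — the paper's limit-plus-integral-identity route sidesteps this entirely. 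Note also that both proofs implicitly assume the recursion has not terminated exactly at step $m$; otherwise $\blF_m=\blF$ and the strict inequality in the statement fails.
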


\begin{proof}
We apply Theorem~\ref{lem01} to $\blF_m$ and $\blTF_m$, yielding $0<\blF_{m-1}(s)<\blF_{m}(s)$ 
and $\blTF_{m}(s)<\blTF_{m-1}(s)$. 
To conclude the proof, we notice that for $s$ satisfying the 
condition of the proposition 
$\lim_{m\to\infty} \blF_m(s)=\blF(s)$, i.e., $\blF_m(s)<\blF(s)$. 
From $T_m- \tilde T_m\ge 0$ we obtain 
{
	\begin{eqnarray*}
		\tilde \blF_m(s)-\blF_m(s)&=&E_1^T(\tilde T_m+sI)^{-1}E_1-E_1^T (T_m+sI)^{-1}E_1\\
		&=&\int_{0}^{1}\frac{d}{dr}E_1^T(T_m+ r(\tilde T_m-T_m)+sI)^{-1}E_1 d r\\
		&=&-\int_{0}^{1}V(r)^T(\tilde T_m-T_m)V(r) dr \ge 0,
	\end{eqnarray*}
}
with $V(r)=[T_m+ r(\tilde T_m-T_m)+sI]^{-1}E_1$, that concludes the proof.
\end{proof}
\vskip 0.1in
The ordering above provides us with a computable error estimate of the approximation of $\blF$, that is
\begin{equation}\label{eq:ErrorBound}
\|\blF- \blF_m \| < \|\blTF_m - \blF_m \|.
\end{equation}

{The two-sided bounds of Proposition~\ref{prop:03} indicate the possibility of using their averaging to compute an error correction. 
Such an approach was considered in the works of Reichel and his 
co-authors for Gauss-Radau quadrature with $p=1$ \cite{lot2008} and for $p>1$ for anti-Gauss quadratures \cite{lot2013}. Here we identify an important class of PDEs operators for which the averaging is particularly efficient. 
%

\subsection{Acceleration via averaged Gauss-Radau and Gauss rules}\label{sec:Averging}
{ We propose two averaging formulas for the developed block Gauss-Radau and block Gauss rules to accelerate convergence. We motivate and analyze these averaging formulas in Appendix~\ref{sec:PostQuadErr}.

The first averaging formula, which we refer to as ``Averaging 1" in the numerical experiments, is the arithmetic average of the Gauss and Gauss-Radau approximation
\begin{equation}\label{eq:Extrapol1}
\widehat \blF_m(s):=\frac 1 2 \left(\blF_m(s)+\tilde \blF_{m+1}(s)\right).
\end{equation}
We stress that the computation of 
$\tilde \blF_{m+1}(s)$ requires the same $m$ steps of the block 
Lanczos algorithm as $ \blF_{m}(s)$. 


In Appendix~\ref{sec:PostQuadErr} (for $p=1$) we show that for the cases with linear convergence (corresponding to the problems arising from discretzation of operators with continuous spectrum, e.g., PDEs in unbounded domains) such an averaging leads to approximate error reduction by a factor equal to the linear convergence rate of Gauss or Gauss-Radau quadratures, so the proposed averaging will be beneficial for problems with slow linear convergence.


The second averaging formula, which we refer to as ``Averaging 2" in the numerical experiments is the geometric mean
	\begin{equation}\label{eq:Extrapol2}
	\widecheck \blF_m(s):= \exp\left[\frac 1 2 [\log(\overline{\blF_m}(s))+\log(\widehat \blF_{m}(s))]\right] 
	\end{equation}
	of the arithmetic mean $\widehat \blF_m(s)$ and harmonic mean 
	\begin{equation}
	\overline{\blF_m}^{-1}:=\frac 1 2 \left [(\blF_m(s))^{-1}+(\tilde \blF_{m+1}(s))^{-1}\right].
	\end{equation}
This average is invariant if we substitute $\blF^{-1}$ for $\blF$ and reverse their order, i.e. the averaging formula for $\blF$ and $\blF^{-1}$ are the same. This is a significant property of relevance for matrices $A$ stemming from discretizations of 2D PDEs as explained in Appendix~\ref{sec:PostQuadErr}. The simple averaging formula~(\ref{eq:Extrapol1}) lacks this property.

}

\section{Numerical Examples}\label{sec:NumEx}

In this section, we consider the application of the developed error bounds and show that the averaging formulas accelerate convergence for MIMO transfer function computations for PDE operators with continuous or very dense spectrum when Krylov subspace approximations lose their spectral adaptivity. This phenomenon can only be observed for large-scale problems, thus dimensions of our experiments are of order $10^5-10^6$. Though the theory in this manuscript is limited to the resolvent with $s\in \mathbb{R}^+$, we stress that the error bound, monotonicity and error correction via averaging seem to hold for imaginary $s$ and other functions such as $\phi(A)=\exp(-A t)$. We also explore possible accelerations stemming from adding random vectors to the initial block.
All errors reported are measured in the 2-norm.

\subsection{Transfer functions of PDE operators on unbounded domains}\label{sec:PDE}
The computation of transfer functions of self-adjoint PDE operators on unbounded domains is the main target of the proposed approach. For accurate PDE discretization of such domains, we adopted the optimal grid approach~(\cite{idkGrids}) allowing spectral convergence in the exterior part of the computational domain, e.g., see \cite{druskin2016near} with a more detailed explanation of optimal grid implementation. A trade-off of the spectral approximation of the exterior problem is that $A$'s spectrum approximates the continuum spectrum of the exact problem well, and thus the Krylov subspace approximations lose their adaptability. The proposed averaging attempts to alleviate this problem by adapting to continuous spectra.

\subsubsection{2D Diffusion}\label{sec:subsubsection2D}

We discretize the operator
\[
\sigma({\bf x})^{-\frac{1}{2}}\Delta\sigma({\bf x})^{-\frac{1}{2}}
\]
on an unbounded domain using second-order finite differences on a $300\times 300$ grid. In the grid center where the variations of $\sigma(\bx)$ are supported, we use a grid size of $\delta_x=1$. { This formulation can be associated with both heat transfer and the scalar diffusion of electromagnetic fields. To approximate this, we consider a Dirichlet problem on a bounded domain with $N_{opt}=10$ exponentially increasing grid steps in the exterior and a uniform grid in the interior part. Utilizing the optimal geometric factor of $\exp{(\pi/\sqrt{10})}$ as indicated by \cite{idkGrids}, we achieve spectral accuracy in the exterior region of the domain, thereby effectively approximating the boundary condition at infinity (exponential decay).} The resulting grid and function $\sigma(\bx)$ are shown in Figure~\ref{fig:Heat2config}. Only the first two steps of the geometrically increasing grid are shown in the figure since they increase rapidly. In this case, $B$ is a vector of discrete delta functions with entries at grid nodes corresponding to the transducer locations shown in the same figure as {\it Trx}. 

The convergence for $s=10^{-3}$ is shown in Figure~\ref{fig:HeatReal} alongside the convergence curve for purely imaginary shifts $s=\imath\cdot10^{-3}$ in Figure~\ref{fig:HeatImag}. For imaginary shifts $s$, block Lanczos converges slower.  The monotonicity result from Theorem~\ref{lem01} and the error bound from equation~(\ref{eq:ErrorBound}) are only proven for $s\in\mathbb{R}^+$. Despite this the convergence for purely imaginary $s$ is monotone in this example, the upper bound holds and the averaged quadrature rules reduce the approximation error by an order of magnitude. For the chosen shifts both averaging formulas perform equally well.

In Figure~\ref{fig:HeatReal} we compare the introduced methods with the averaged anti-Gauss rules developed in \cite{lot2013}. An anti-Gauss rule can be obtained by modifying the last off block diagonal entry of $T_{m+1}$, which we call $\Beta_{m+1}$, by multiplication with the factor $\sqrt{2}$. Let this matrix be called $T_{m+1}^{\cal H}$. There is no guarantee that this matrix is positive definite, and in about 38\% of the iterations for the 2D diffusion problem it is not. In this experiment the averaged block anti-Gauss rule
\be
\widehat{ \cal F}^{\cal H}_m = \frac{1}{2}\left( {\cal F}_m + E_1^T (T_{m+1}^{\cal H} +sI)^{-1}E_1 \right)
\ee
has the same error order as the block Gauss rule but is less stable due the presence of negative eigenvalues.

In Figure~\ref{fig:HeatExp} the convergence for the matrix exponential $\phi(A)=\exp{(-At)}$ for $t=10^{5}$ is shown. The convergence is superlinear, the error bound becomes tight with increased iterations and the averaged quadrature rule from formula~\eqref{eq:Extrapol2} performs better for low iteration counts. Numerical results for accelerating convergence further through subspace enrichment by adding random starting vectors to $B$ in $\mathcal{K}_m(A,B)$ are presented in Appendix~\ref{sec:Enrichment}.


\begin{figure}[h!]
	\centering
	\includegraphics[width = 0.65 \linewidth]{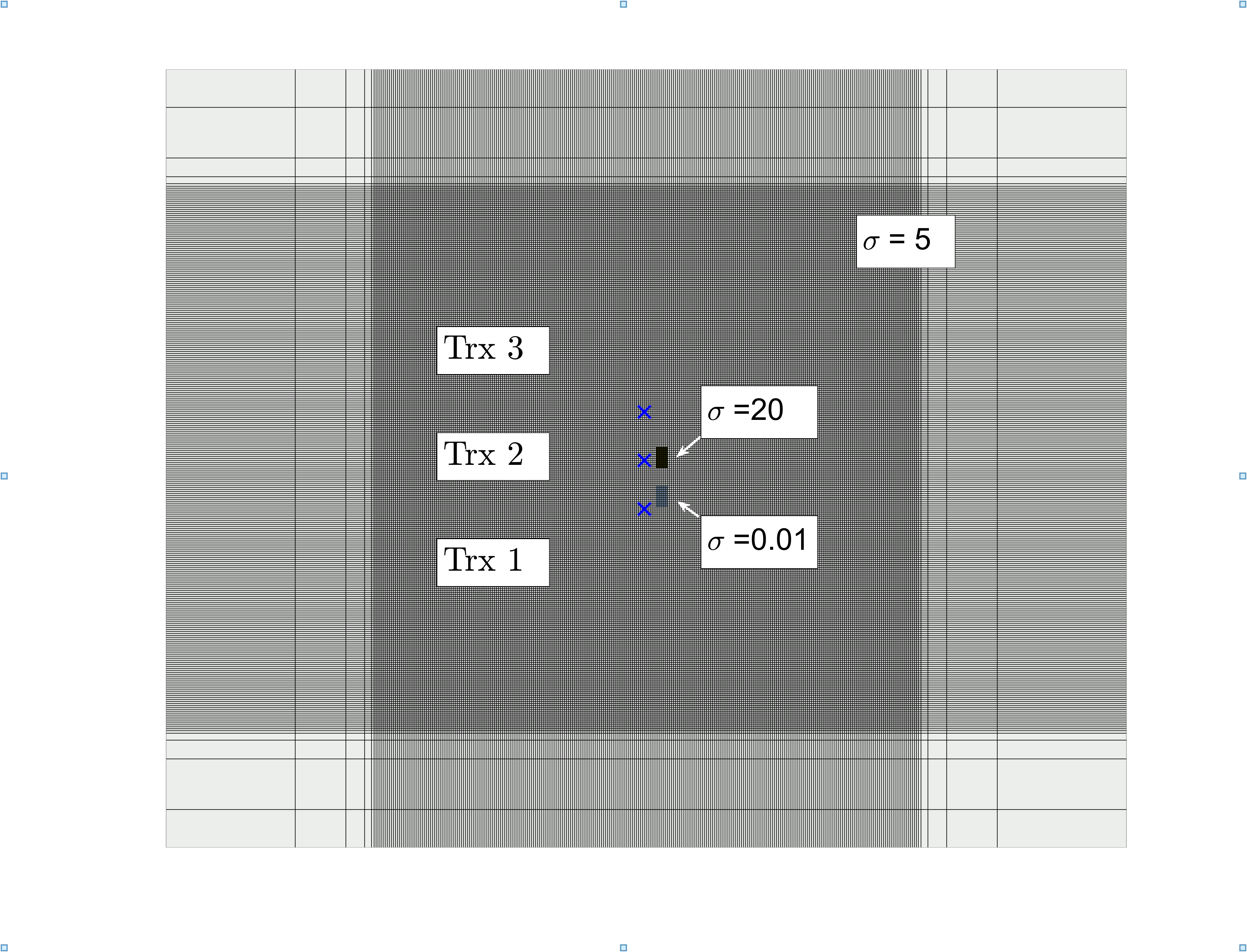}
	\caption{Grid, head conductivity $\sigma(\bx)$ and transducer locations of the heat diffusion textcase.}\label{fig:Heat2config}
\end{figure}

\begin{figure}[h!]
	\centering	
	\begin{subfigure}[b]{.48\linewidth}
		\centering
		\includegraphics[width = \linewidth]{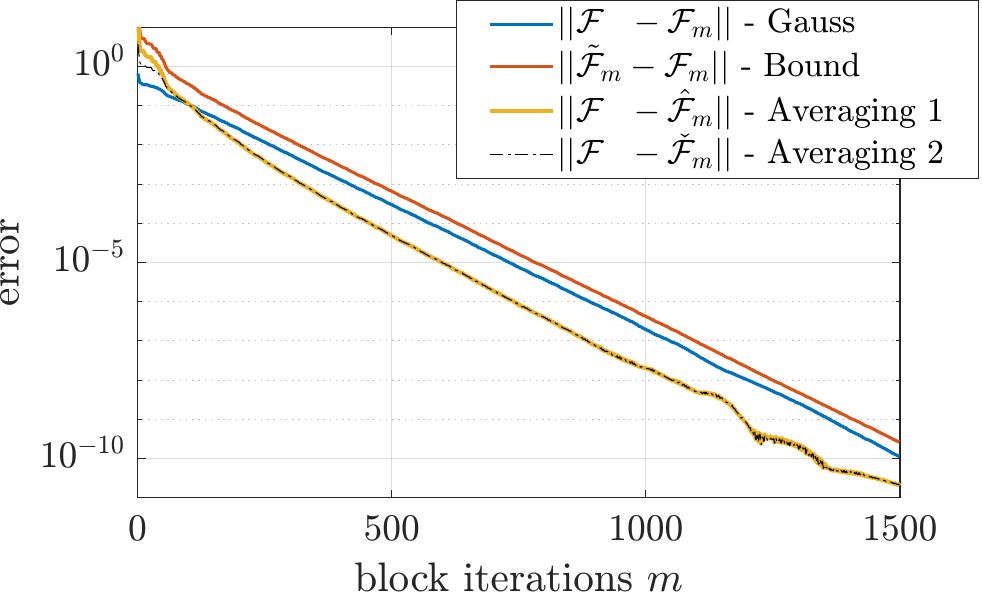}
		\caption{Error norms for {\bf imaginary} $s=0.001i$.}\label{fig:HeatImag}
	\end{subfigure}
	\begin{subfigure}[b]{.48\linewidth}
		\centering
		\includegraphics[width = \linewidth]{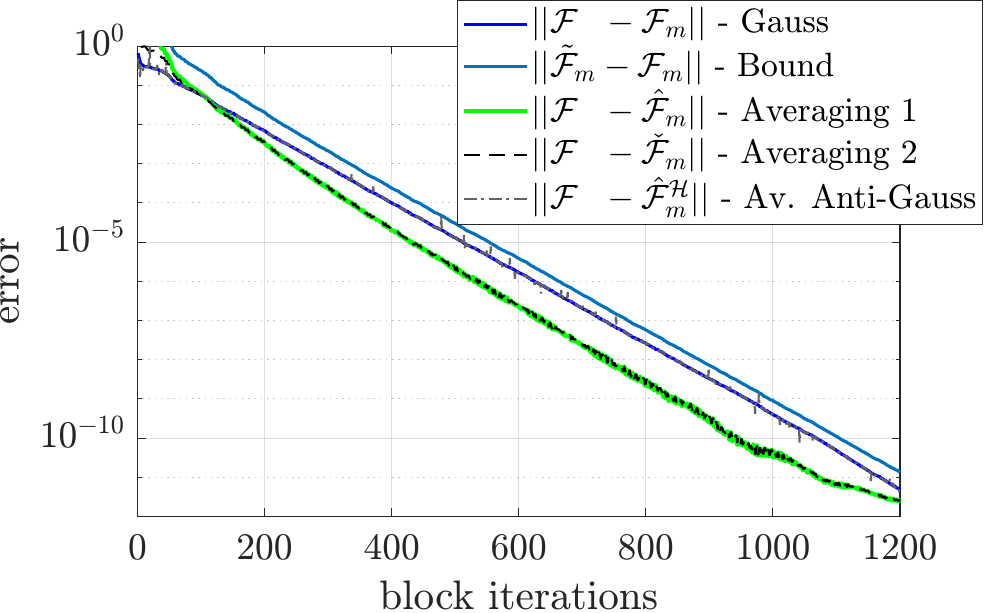}
		\caption{Error norms for {\bf real} shift $s=0.001$}\label{fig:HeatReal}
	\end{subfigure}%
	
	\begin{subfigure}[b]{.48\linewidth}
		\centering
		\includegraphics[width = \linewidth]{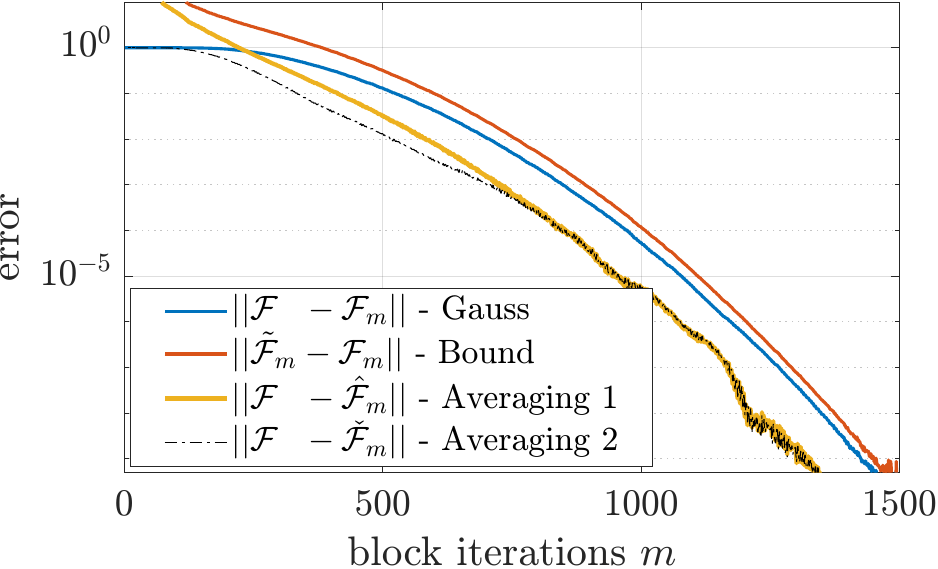}
		\caption{Error norms for the exponential function $\phi(A)=\exp{(-At)}$ for $t=10000$.}\label{fig:HeatExp}
	\end{subfigure}

	\caption{Convergence and quadrature rule averaging for the 2D heat diffusion problem. }
\end{figure}

\subsubsection{3D Electromagnetic diffusion}\label{sec:EMdiff} In this example we solve the 3D Maxwell equations in the quasi-stationary (diffusive) approximation. Krylov subspace approximation of action of matrix functions for the solution of that problem was introduced in \cite{druskin1988spectral} and remains the method of choice for many important geophysical applications. The diffusive Maxwell equations in $\RR^{3}$ can be transformed to the form
\be\label{eq:max}
(\nabla \times \nabla \times +\sigma(\bx) \mu_0 s)\mathcal{E}_{(r)}(\bx,s) = -  s\mathcal{J}^{\rm ext}_{(r)}(\bx,s), \quad \bx\in \Omega,
\ee
where $\sigma$ is the electrical conductivity, $\mu_0$ the vaccum permeability, $\mathcal{J}^{\rm ext}_{(r)}$ is an external current density indexed by transmitter index $(r)$ causing the electrical field strength $\mathcal{E}_{(r)}(\bx,s)$. For $s\notin\RR_-$ the solution of $\eqref{eq:max}$ converges to $0$ at infinity, i.e. \be\label{eq:inftymax}\lim_{\bx\to\infty} \|E(\bx)\|=0. \ee 
We approximate \eqref{eq:max}-\eqref{eq:inftymax} by the conservative Yee grid similarly to \cite{druskin1988spectral}.
 Following the same framework as in the diffusion problem in $\RR^2$ described earlier, we truncate the computational domain with Dirichlet boundary conditions for the tangential components of $\mathcal{E}_{(r)}$ on the domain boundary with a optimally geometrically progressing grid, which gives spectral convergence of the condition~\eqref{eq:inftymax} at infinity. In our particular realization, we used $N_{opt}=6$ gridsteps geometrically increasing with geometric scaling factor $\exp{(\pi/\sqrt{6})}$ according to \cite{idkGrids}. The grid is $N_x=80$, $N_y=100$ and $N_z=120$ gridsteps large leading to a matrix $A$ of size $N=2\, 821\, 100$. The configuration has a homogeneous conductivity of $\sigma=1$ in the entire domain except for two inclusions shown in Figure~\ref{fig:4_config} where $\sigma=0.01$. As current densities, we use 6 magnetic dipoles (approximated by electric loops), with 3 dipoles located above the inclusion and 3 dipoles located in the middle of the inclusions as highlighted by the arrows in Figure~\ref{fig:4_config}. Such configuration corresponds to the tri-axial borehole tools used in electromagnetic hydrocarbon exploration \cite{saputra2024adaptive} and the matrix $B$ thus has 6 columns. 

The operator and its discretization are nonnegative definite; however, the vectors $B$ corresponding to divergence-free magnetic sources are intrinsically orthogonal to the operator's null space, consisting of potential solutions. Thus, it satisfies the conditions in Remark~\ref{rem:Nullspace}.

In Figure~\ref{fig:EMSweeps} the convergence is shown for various real values of $s$ after $m=400$ block iterations. The error bound is tight across all values for $s$ 
and there is no difference in averaging formulas. Further, in Figure~\ref{fig:EM_conv_Real} the convergence for this test case is shown for a real shift of $s=0.05$ and in Figure~\ref{fig:EM_conv_Imag} for an imaginary shift.

\begin{figure}[h!]
 \centering
 
 \begin{subfigure}[b]{.47\linewidth}
 \centering
 \includegraphics[width = 0.75\linewidth]{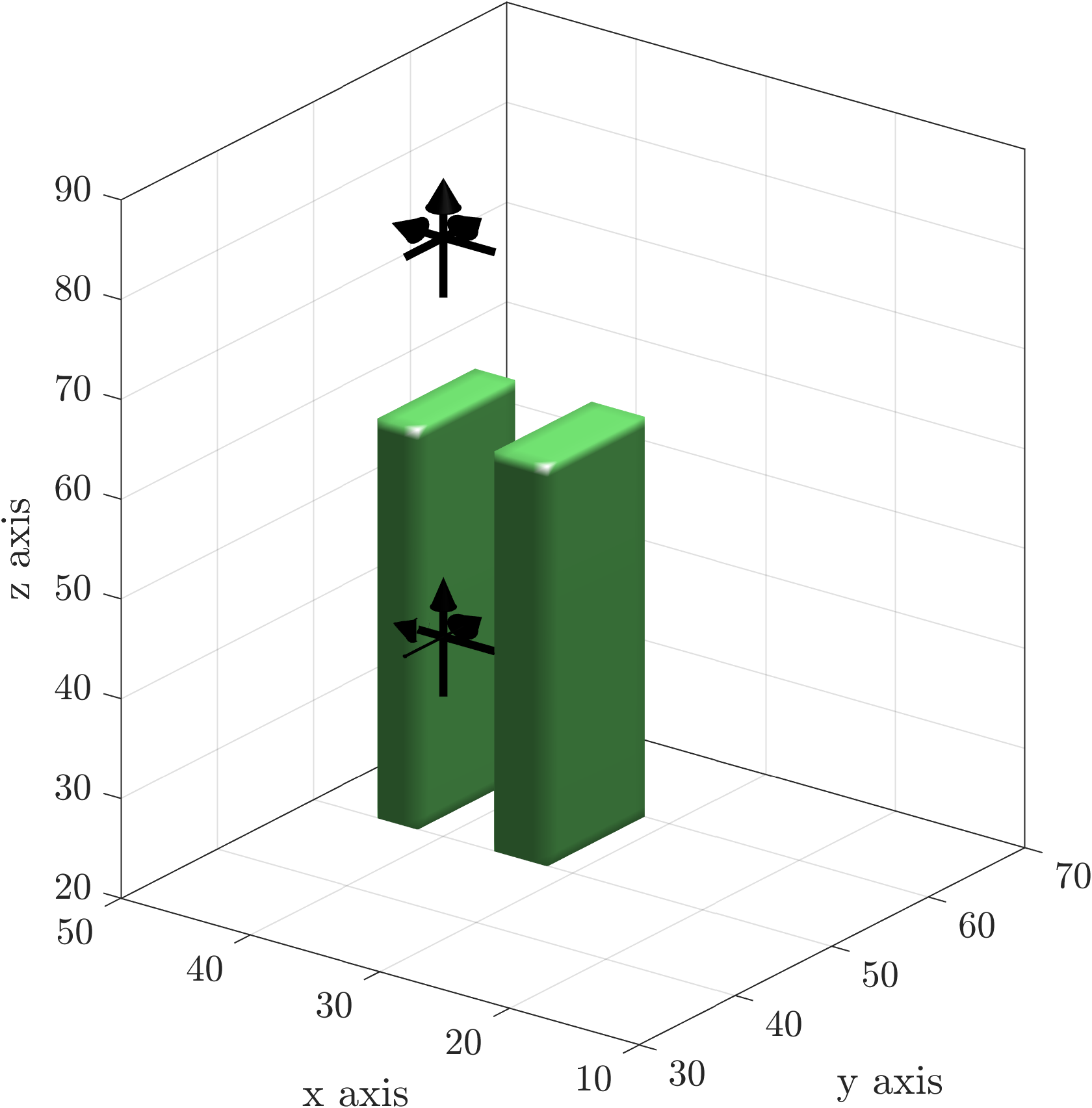}
 \caption{3D rendering of the simulated configuration with two inclusions.}
 \label{fig:4_config}
 \end{subfigure}%
 \hspace{1em}
 \begin{subfigure}[b]{.47\linewidth}
 \centering
 \includegraphics[width = \linewidth]{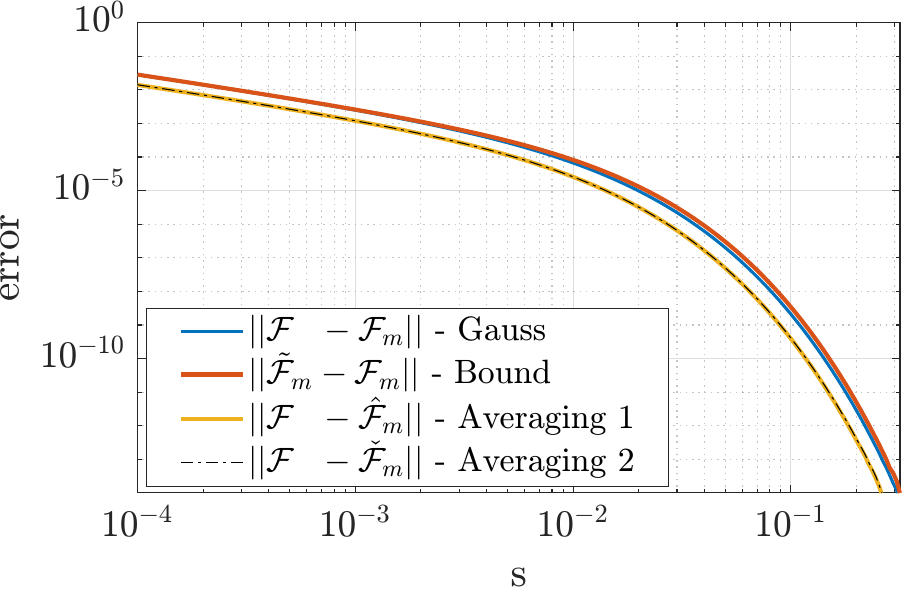}
 \caption{Error norms for various values of $s$ after $m=400$ iterations.}
 \label{fig:EMSweeps}
 \end{subfigure}
  \begin{subfigure}[b]{.47\linewidth}
 \centering
 \includegraphics[width = \linewidth]{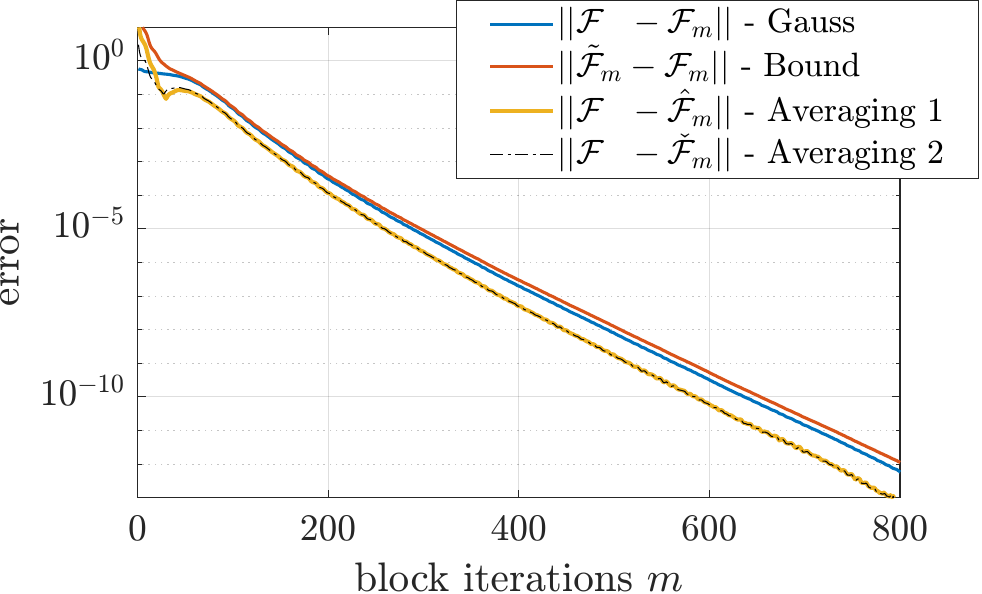}
 \caption{Error norms with real shift $s=0.05$. }
 \label{fig:EM_conv_Real}
 \end{subfigure}%
 \hspace{1em}
 \begin{subfigure}[b]{.47\linewidth}
 \centering
 \includegraphics[width = \linewidth]{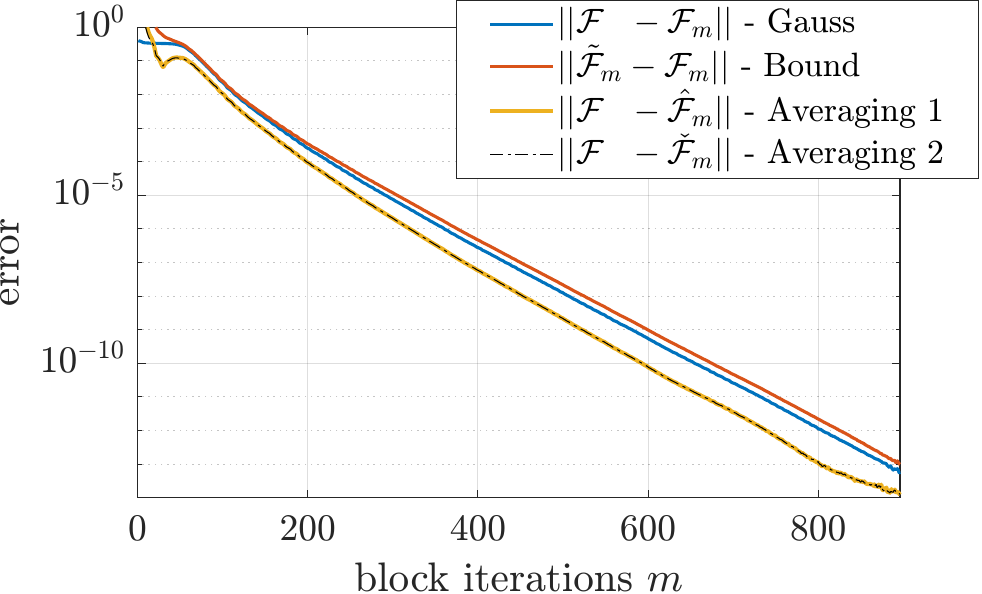}
 \caption{Error norms with {\rm imag.} shift $s=0.1\imath$.}
 \label{fig:EM_conv_Imag}
 \end{subfigure}%
\caption{Convergence of the block Lanczos method and averaging for the 3D electromagnetic case.}
\end{figure}

 \subsection{Graph Laplacian}\label{sec:graphL} { Efficient computation of SISO and MIMO transfer functions of large graph-Laplacians is important for efficient clustering of big data sets, computation of centrality measures and other applications \cite{lot2008,Benzi2020MatrixFI,druskin2022distance}. For our numerical experiments we} take the \texttt{web-BerkStan} graph from the SNAP (Standford network analysis project) \cite{snapnets} and turn it into an undirected graph by symmetrization. The original graph has 685230 nodes and 7600595 edges and we construct the normalized graph Laplacian $A$ as our test matrix. The nodes of the graph represent pages from berkeley/stanford.edu domains and directed edges (here converted to undirected edges) represent hyperlinks between them. The data were collected in 2002. 
 
As $B$ matrix we take $p=3$ (graph) delta functions chosen at random nodes\footnote{With random seed 1 we obtain delta functions at nodes 207168, 100562, and 63274.}. The graph Laplacian is nonnegative definite, where the null space of the graph Laplacian consists of constant vectors on every connected graph component. The columns of $B$ are graph delta functions in our experiments and thus not orthogonal to the null space; however, excluding the case of connected graph components of dimension 1 (single disconnected nodes), the intersection of the column space of $B$ with the null space of $A$ is empty, and as such, satisfies the condition of Remark~\ref{rem:Nullspace}.
 
 In Figure~\ref{fig:GraphLap} we evaluate the resolvent $\phi(A,s)=(A+s I)^{-1}$ for $s=10^{-4}$ showing that the error bound is tight and the averaging formulas give similar results. After $m=50$ block iterations the error for various values of $s$ is displayed in Figure \ref{fig:GraphLap2}.

 \begin{figure}[h!b]
 	\centering
 	\begin{subfigure}[b]{.48\linewidth}
 		\centering
 		\includegraphics[width = \linewidth]{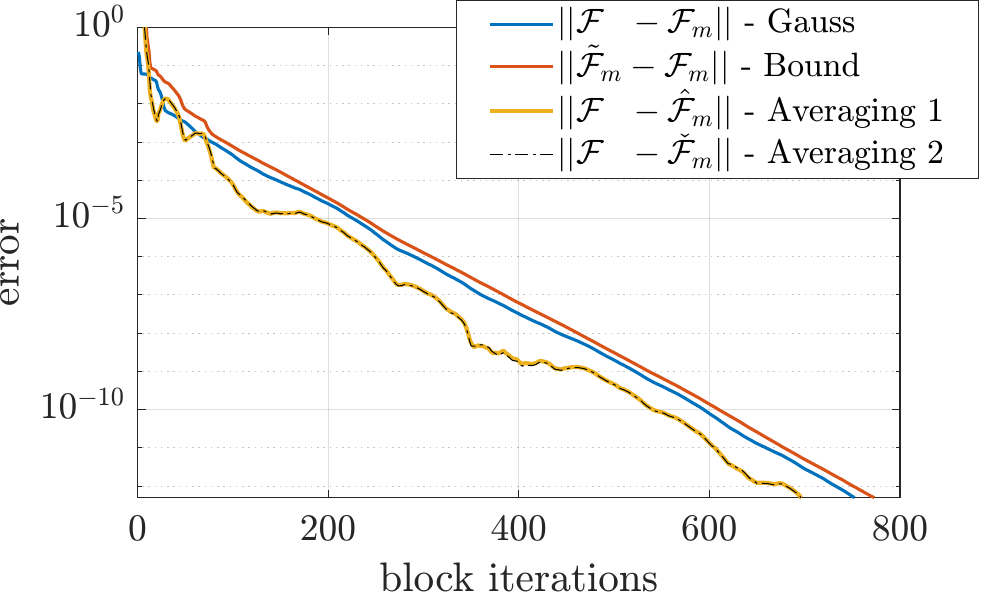}
 		\caption{Error norms with shift $s=10^{-4}$. \phantom{just force a line break}}
 		\label{fig:GraphLap}
 	\end{subfigure}~
 	\begin{subfigure}[b]{.48\linewidth}
 		\centering
 		\includegraphics[width = 0.93\linewidth]{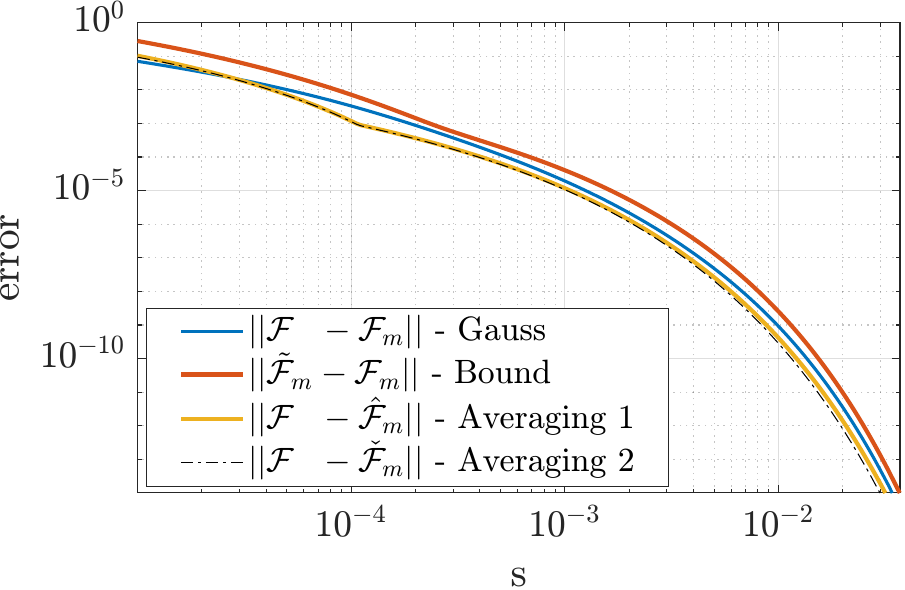}
 		\caption{Error norms after $m=50$ iterations for various values of $s$.}
 		\label{fig:GraphLap2}
 	\end{subfigure}%
 	\caption{Convergence, error bound and averaged quadrature rules for the normalized graph Laplacian test case}
 \end{figure}
 
 { In contrast to PDE operators on unbounded domains, data-related graphs may exhibit spectral distributions with non-vanishing gaps even at very large dimensions. Consequently, they cannot be regarded as approximations of the problems discussed in Proposition~\ref{prop:corr}. Nevertheless, as illustrated in Figure~\ref{fig:GraphLap}, quadrature approximations for such graphs can demonstrate linear convergence, in which case they satisfy tight block Gauss-Radau bounds and their averaging will be efficient. }

\section{Conclusions}
In this paper, we derived a block extension of Gauss-Radau quadrature for the computation of MIMO transfer functions by leveraging the connection between the block Lanczos algorithm and matrix Stieltjes continued fractions. We proved that the block Gauss and Gauss-Radau quadrature rules provide monotonically convergent two-sided bounds for the resolvent, important for MIMO transfer functions of LTI dynamical systems with real Laplace frequencies. For problems involving the approximation of operators with a continuous spectrum, we also prove (in the SISO case) that these bounds are tight, thereby justifying  the averaging of Gauss and Gauss-Radau approximations.

Our numerical experiments with large-scale graph-Laplacian, 2D diffusion, and 3D Maxwell systems confirmed the tightness of the derived bounds and the efficiency of the averaging formula, not only for the cases analyzed but also for transfer functions of LTI systems with imaginary Laplace frequency and time domain responses. Additionally, we explored the potential of using random enrichment of the initial Lanczos block within the developed algorithm framework. These findings highlight the robustness and broad applicability of our approach targeted for large-scale problems.

\section{Data Availability}

The datasets generated during and/or analyzed during the current study are available from the corresponding author upon reasonable request.

\begin{acknowledgements}
Part of this work was performed while Valeria Simoncini (VS) was in residence at the Institute for Computational and Experimental Research in Mathematics in Providence, RI, during the Numerical PDEs: Analysis, Algorithms, and Data Challenges semester program.

The work of VS was also funded by the European Union - NextGenerationEU under the National Recovery and Resilience Plan (PNRR) - Mission 4 Education and research - Component 2 From research to business - Investment 1.1 Notice Prin 2022 - DD N. 104 of 2/2/2022, entitled ``Low-rank Structures and Numerical Methods in Matrix and Tensor Computations and their Application'', code 20227PCCKZ – CUP J53D23003620006. VS is a member of the INdAM Research Group GNCS; its continuous support is gladly acknowledged.

The work of Vladimir Druskin was partially supported by AFOSR grants FA 955020-1-0079, FA9550-23-1-0220, and NSF grant DMS-2110773.

The authors like to thank Leonid Knizhnerman for helpful discussions related to the material in section~\ref{sec:PostQuadErr} and Michele Benzi for discussions related to the spectra occurring in large graphs.

The authors would like to thank the anonymous reviewers for their comments and for pointing out relevant literature missed in a previous version of this manuscript.
\end{acknowledgements}

%
\section*{Conflict of interest}
The authors declare that they have no conflict of interest.

\bibliographystyle{spmpsci} 
\bibliography{bib} 

\appendix

\section{Posterior quadrature error correction for problems with continuous spectrum}\label{sec:PostQuadErr}

{

In this work we mainly target matrices $A$ obtained via (accurate) discretization of operators with continuous spectra corresponding to PDE problems on unbounded domains as detailed in the numerical results. For simplicity, instead of symmetric, matrix valued $ \blF$ obtained from MIMO problems, in this appendix we analyze scalar, single-input single-output (SISO) transfer functions corresponding to $p=1$. For the continuous problems we denote the transfer function as $ \blf$ and denote $\blf_m$ and $\tilde \blf_m$ as the SISO counterparts of, respectively, $\blF_m$ and $\tilde \blF_m$.

Specifically, we assume that the support of the continuous part of the spectral measure of $\blf (z)$ has positive logarithmic capacity \cite{Baker_Graves-Morris_1996,knizhnerman2002adaptation,Liesen2022ComputingTL}, e.g., consists of the union of intervals of the real positive semi axis. 

That allows us to define a positive 
 potential solution on the complex plane corresponding to the 
Green function of the unit charge located at $z=\infty$ with Dirichlet boundary conditions on the support of the continuous components of the spectral measure of $\blf (z)$. This solution plays a fundamental role in the convergence analysis of the Krylov subspace methods, e.g., $g(0)$ is the linear convergence rate of the Conjugate Gradient Method \cite{knizhnerman2002adaptation}.

First, we recall that the Gauss and Gauss-Radau quadratures match $m$ and $m-1$ matrix moments of 
the spectral measure respectively \cite{lot2008}, thus 
\be\label{eq:pade} 
\blf(s)-\tilde\blf_m(s)=O(s^{-2m-1}), \quad \blf(s)-\blf_m(s)=O(s^{-2m}),\quad s\rightarrow \infty .
\ee
Next, we notice, that $\blf_m(s)$ is a $[m-1/m]$ rational function, as follows from \eqref{eq:S-fraction1}. Also, it follows from \eqref{eq:S-fraction2}, that $s\tilde\blf_m(s)$ is a $[m-1/m-1]$ rational function. Thus $\blf_m(s)$ and $s\tilde\blf_m(s)$ are near-diagonal Pad\'e approximants of respectively $\blf_m(s)$ and $s\blf_m(s)$ at $s=\infty$. Both of these functions are Stieltjes, and their Pad\'e approximations converge uniformly in any compact set not including the support of the spectral measure. This together with Proposition~\ref{prop:03} allows
an adoption of  Stahl's Pad\'e 
convergence Theorem \cite[Theorem 6.6.9]{Baker_Graves-Morris_1996} based on 
potential theory, to this specific case and it can be stated as follows.

\begin{proposition}\label{lem:stahl}
	 
	 $\forall [a,b]$, with $0<a<b$ and $\forall \epsilon>0$ there exists an $m$ such that 
		\begin{equation}
		(e^{ -g(s)}-\epsilon)^{2m-2}<\tilde{ \blf}_m(s) -\blf(s)
		< (e^{ -g(s)}+\epsilon)^{2m-2}, \label{eq:b1}
		\end{equation}
		and
		\begin{equation} 
		(e^{ -g(s)}-\epsilon)^{2m-1}<\blf(s)-\blf_m(s) <(e^{ -g(s)}+\epsilon)^{2m-1}.\label{eq:b2}
		\end{equation}	
\end{proposition}

Proposition~\ref{lem:stahl} guides us to the error correction formula $\hat f_m=\frac 1 2 (f_m+\tilde f_{m+1})$, which in the $p>1$ setting is stated in Equation~(\ref{eq:Extrapol1}). 


Subtracting \eqref{eq:b2} from \eqref{eq:b1} we obtain the following estimate:
\begin{proposition}\label{prop:corr}
	For $p=1$, small $g(s)$ and thus $ 1- e^{-g(s)}\approx g(s)$ the error of the averaged $\hat f_m(s)$ behaves as
	\[|f(s)-\widehat \blf_m(s)|\approx [g(s)+o(1)] |\blf(s)- \blf_m(s)|,\]
 for large $m$.
\end{proposition}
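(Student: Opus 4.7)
The plan is to combine the definition~\eqref{eq:Extrapol1} of $\widehat{\blf}_m$ with the asymptotic rates of Proposition~\ref{lem:stahl}, and then invoke the small-$g$ expansion built into the hypothesis. First, I would split the extrapolation error into the two monotone bounds,
\[
\blf(s) - \widehat{\blf}_m(s) = \tfrac{1}{2}\bigl[\blf(s) - \blf_m(s)\bigr] \;-\; \tfrac{1}{2}\bigl[\tilde{\blf}_{m+1}(s) - \blf(s)\bigr],
\]
where both bracketed terms are positive by Proposition~\ref{prop:03}, so the sign of the corrected error is determined by which of the two dominates asymptotically.

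Next, I would substitute $m \mapsto m+1$ into the first estimate of Proposition~\ref{lem:stahl} to obtain the root-rate $e^{-g(\sqrt{s})(4m+1)}$ for $\tilde{\blf}_{m+1} - \blf$, against $e^{-g(\sqrt{s})(4m-1)}$ for $\blf - \blf_m$. Writing $\alpha := e^{-g(\sqrt{s})}$ and treating both near-diagonal Pad\'e-sequence asymptotics as sharing a common leading prefactor $C(s)$---the step I discuss below---one obtains
\[
\tilde{\blf}_{m+1}(s) - \blf(s) \;\approx\; \alpha^2\,\bigl[\blf(s) - \blf_m(s)\bigr] \;=\; e^{-2g(\sqrt{s})}\bigl[\blf(s) - \blf_m(s)\bigr].
\]
Plugging this back into the first display yields
\[
\blf(s) - \widehat{\blf}_m(s) \;\approx\; \tfrac{1}{2}\bigl(1 - e^{-2g(\sqrt{s})}\bigr)\bigl[\blf(s) - \blf_m(s)\bigr],
\]
and the hypothesis $1 - e^{-g}\approx g$, combined with the factorization $(1-e^{-g})(1+e^{-g}) = 1-e^{-2g}$, gives the claimed form up to a bounded multiplicative constant.

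The main obstacle is to defend the common-prefactor step. Proposition~\ref{lem:stahl} supplies only $n$-th root asymptotics and does not pin down the subexponential constants sitting in front of the exponential rates, so comparing $\tilde{\blf}_{m+1}-\blf$ to $\blf-\blf_m$ at the sharper multiplicative level is not automatic. A fully rigorous version would require a finer potential-theoretic comparison of the Gauss and Gauss-Radau near-diagonal Pad\'e sequences for $\lambda^{-1}\blf(\lambda^2)$---for instance via the Markov-type integral representation of the S-fraction tails $\blC_i$ and the companion tails of $\tilde\blF_{m+1}$, or via a residue analysis linking the poles and weights of $T_m$ and $\tilde T_{m+1}$---to confirm that the two leading prefactors indeed coincide in the regime $g(\sqrt{s})\ll 1$.
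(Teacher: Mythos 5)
Your reconstruction is the natural one, and since the paper states Proposition~\ref{prop:corr} without supplying any proof, it is essentially the only route available from the stated results: split $\blf-\widehat{\blf}_m=\tfrac12(\blf-\blf_m)-\tfrac12(\tilde{\blf}_{m+1}-\blf)$, note both terms are positive by Proposition~\ref{prop:03}, compare the two one-signed errors via Proposition~\ref{lem:stahl}, and expand for small $g$. Your flag on the ``common prefactor'' step is exactly right and is the genuine analytical gap: Proposition~\ref{lem:stahl} only pins down $(4m-1)$-th and $(4m-3)$-th root asymptotics, which tolerate arbitrary subexponential factors, so it cannot by itself justify the ratio statement $\tilde{\blf}_{m+1}-\blf\approx e^{-2g}(\blf-\blf_m)$ on which the whole computation rests. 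Closing it requires strong (ratio) asymptotics for the two interleaved near-diagonal Pad\'e sequences, e.g.\ Szeg\H{o}-type asymptotics for the orthogonal polynomials of the spectral measure and of its Radau modification $\lambda\,{\rm d}\mu(\lambda)$; the paper does not supply this either, which is consistent with the proposition being an ``$\approx$'' heuristic rather than a theorem with a proof.

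The one place you should not hedge is the constant. Carrying the exponents of Proposition~\ref{lem:stahl} through your own computation gives $\tfrac12(1-e^{-2g})=\tfrac12(1-e^{-g})(1+e^{-g})\approx \tfrac12\cdot g\cdot 2= g$, i.e.\ $|\blf-\widehat{\blf}_m|\approx g(\sqrt{s})\,(\blf-\blf_m)$, whereas the statement asserts $2g(\sqrt{s})\,(\blf-\blf_m)$. Since the entire content of the proposition is this quantitative gain factor, dismissing the mismatch as ``a bounded multiplicative constant'' concedes the statement. As far as the exponents $4m-1$ (Gauss at step $m$) and $4(m+1)-3=4m+1$ (Gauss--Radau at step $m+1$) are concerned, your value $g$ is what actually follows; the stated $2g$ would require $\tilde{\blf}_{m+1}-\blf\approx e^{-4g}(\blf-\blf_m)$, i.e.\ two extra floors of the S-fraction rather than the single extra floor that $\tilde{\blf}_{m+1}$ actually has over $\blf_m$. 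You should state this factor-of-two discrepancy explicitly and either reconcile the bookkeeping or note that the constant in the proposition appears to need correction.
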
}

The potential solution $g(s)$ is small when $s$ is close to the boundary of continuous components of the spectra, i.e., in the case of the spectral interval $[0, \theta_{max}]$, for small positive $s$. Since $2g(s)$ is the linear rate of convergence of $\blf_m$, slower convergence of $\blf_m$ leads to better convergence of the  averaged $\widehat f_m$.

\begin{remark}\label{rem:duality}
	In the context of inverse problem theory, certain PDEs exhibit a significant symmetry structure known as duality, as illustrated in \cite{PhysRevLett.65.325}. This duality implies that the Neumann-to-Dirichlet map for the primary problem can be linearly scaled to correspond to the Dirichlet-to-Neumann map for the dual problem. The transfer function $\blF(s)$, when $B$ is localized at a (partial) boundary of the computational domain, can be interpreted as a discretization of the partial Neumann-to-Dirichlet map and, similarly, can be scaled to represent the Dirichlet-to-Neumann map of the dual problem, denoted as $\blF^{-1}(s)$. Following this logic, it can be demonstrated that $\tilde\blF_m(s)$ can be transformed into $\blF_{m+1}(s)$ for the dual problem. While a rigorous analysis of this phenomenon within the framework of error  correction via averaging is warranted, we leverage this duality intuitively to refine the averaging formula. To adhere to the duality structure, the formula should remain invariant under the substitution of $\blF^{-1}$ for $\blF$, a property that \eqref{eq:Extrapol1} does not satisfy. Hence, we propose a similar, but dual-invariant, formula that satisfies Proposition~\ref{prop:corr}, (see Equation~(\ref{eq:Extrapol2})),
	\begin{equation}\label{eq:Extrapol2_A}
	\widecheck \blF_m(s):= \exp\left[\frac 1 2 [\log(\overline{\blF_m}(s))+\log(\widehat \blF_{m}(s))]\right] 
	\end{equation}
	with 
	\begin{equation}
	\overline{\blF_m}^{-1}:=\frac 1 2 \left [(\blF_m(s))^{-1}+(\tilde \blF_{m+1}(s))^{-1}\right].
	\end{equation}
 { Intuitive reasoning in this remark is limited to 2D problems, and interestingly enough our numerical experiments show advantage of such an averaging compared to the simple arithmetic one only for 2D models as shown in section~\ref{sec:subsubsection2D}. We hope to study this phenomenon more rigorously in our future work.}

\end{remark}

\section{Random enrichment of initial block}\label{sec:Enrichment}
Adding columns to the initial block $B$ can reduce the number of block iterations only if 
 the Krylov subspaces generated by the columns have a significant intersection. This is not the case for the problems with localized inputs/outputs considered in section~\ref{sec:PDE}. Such problems however are important in remote sensing applications. To improve convergence for such a class of problems we experiment with adding random column vectors to $B$ for the problem considered in section~\ref{sec:subsubsection2D}. We used an enriched block $B'=[B,R]$, where $R\in \RR^{n\times p'}$ is a block of random vectors. The Gauss and Gauss-Radau quadratures using $\mathcal{K}_i(A,B')$ are computed and the leading $p\times p$ block of the computed approximation to ${B'}^T\phi(A,s)B'$ is used to approximate ${B}^T\phi(A,s)B$.

In Figure~\ref{fig:RandVec} we illustrate the effect of adding random vectors to $B$ on the convergence of the method for the shift $s=10^{-2}$ for the 2D diffusion experiments from section~\ref{sec:subsubsection2D}. The vectors added have independent, identically distributed entries chosen from the normal distribution $\mathcal{N}(0,1)$. Since the block Lanczos method is a memory-bandwidth limited algorithm on most modern CPU and GPU architectures, adding extra vectors to $B$ comes at a small increase in computational cost per block Lanczos iteration, yet it greatly affects convergence rates. In the solid black curve only the 3 transducers are used in $B$, adding one random vector to $B$ increases the convergence rate as shown by the dashed line, and adding two additional random vectors to $B$ accelerated convergence further if expressed in terms of block iterations; see, e.g., \cite{O'Leary1980} for a theoretical analysis. To reach an error of $10^{-6}$ the block Lanczos iteration with no added vectors needed $m=202$ iterations (solid black line), with one added random vector in needed $m=169$ iterations (dashed black line), with 3 added vectors it required $m=145$ iterations (dotted black line) and with 12 added vectors $m=86$ iterations (dash-dotted black line). With averaging only $m=129$ iterations were needed for the case of 3 added random vectors. 

The acceleration of averaged quadrature rules is even greater at higher error levels, which can be qualitatively explained as follows. For problems with an (approximately) continuous spectrum, adding vectors to the subspace does not improve the linear convergence rate, resulting in sub-linear acceleration. This effect is amplified by the averaging , which, according to Proposition~\ref{prop:corr} is more significant at the low linear convergence rates observed in the final stage of sub-linear convergence. This phenomenon can be practically important for applications to inverse problems that tolerate low-precision computations due to measurement errors.

It is important to note that adding random starting vectors leads to an increase in the overall size of the matrix $T_m$ due to the enlargement of the block size, despite the acceleration in convergence. Thus any reduction in computation speed comes from sparse matrix-matrix multiplications $AB$ which are memory-bandwidth limited for small $p$.

\begin{figure}[h!b]
	\centering
		\centering
		\includegraphics[width = 0.75\linewidth]{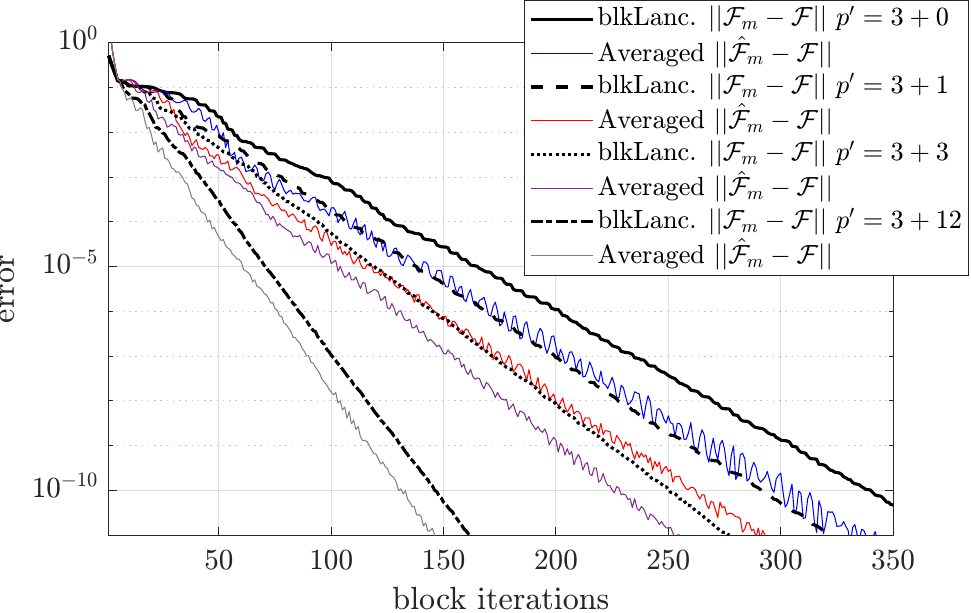}		
		\caption{Acceleration of convergence by adding random vectors to the starting { block }vector $B$ for shift $s=10^{-2}$.}
		\label{fig:RandVec}
\end{figure}

\end{document}